% ------------------------------------------------------------------------
% ***************************** S.A.Seyed Fakharii *****************************
% ------------------------------------------------------------------------
% ******* This is a journal template file for use with AMS-LaTeX. ********
% ------------------------------------------------------------------------

\documentclass[12pt]{amsart}

\usepackage{amsmath}
\usepackage{amssymb}
\usepackage{amsfonts}
\usepackage{amsthm}
\usepackage{enumerate}
\usepackage{hyperref}
\usepackage{color}
%\usepackage{psfrag}
%\usepackage[all]{xy}

% Setup Environments -----------------------------------------------------
\textheight=600pt
\textwidth=435pt
\oddsidemargin=17pt
\evensidemargin=17pt

% Theorem Environments ---------------------------------------------------
\theoremstyle{plain}
\newtheorem{thm}{Theorem}[section]

\newtheorem{prop}[thm]{Proposition}
\newtheorem{lem}[thm]{Lemma}
\newtheorem{cor}[thm]{Corollary}

\theoremstyle{definition}
\newtheorem{dfn}[thm]{Definition}

\newtheorem{rem}[thm]{Remark}

\newtheorem{dfns-rems}[thm]{Definitions and Remarks}
\newtheorem{notas-rems}[thm]{Notations and Remarks}
\newtheorem{exmps-rems}[thm]{Examples and Remarks}

%%%%%%%%%%%%%%%%%%%%%%%%%%%%%%%%%%%%%%%%%%%%%%%%%%%%%%%%%%%%%%%%%%%%%%%%%%

\begin{document}

% ------------------------------------------------------------------------

\title[Symbolic powers of cover ideals]{Symbolic powers of cover ideal of very well-covered and bipartite graphs}

% ------------------------------------------------------------------------

\author[S. A. Seyed Fakhari]{S. A. Seyed Fakhari}

\address{S. A. Seyed Fakhari, School of Mathematics, Statistics and Computer Science,
College of Science, University of Tehran, Tehran, Iran.}

\email{fakhari@khayam.ut.ac.ir}

\urladdr{http://math.ipm.ac.ir/$\sim$fakhari/}

% ------------------------------------------------------------------------

\begin{abstract}
Let $G$ be a graph with $n$ vertices and $S=\mathbb{K}[x_1,\dots,x_n]$ be the
polynomial ring in $n$ variables over a field $\mathbb{K}$. Assume that $J(G)$ is the cover ideal of $G$ and $J(G)^{(k)}$ is its $k$-th symbolic power. We prove that if $G$ is a very well-covered graph such that $J(G)$ has linear resolution, then $J(G)^{(k)}$ has linear resolution, for every integer $k\geq 1$. We also prove that for a every very well-covered graph $G$, the depth of symbolic powers of $J(G)$ forms a non-increasing sequence. Finally, we determine a linear upper bound for the regularity of powers of cover ideal of bipartite graph.
\end{abstract}

% ------------------------------------------------------------------------

\subjclass[2000]{Primary: 13D02, 05E99; Secondary: 13C15}

% ------------------------------------------------------------------------

\keywords{Cover ideal, Very well-covered graph, Linear resolution, Regularity}

% ------------------------------------------------------------------------

\thanks{}

% ------------------------------------------------------------------------

\maketitle

%%%%%%%%%%%%%%%%%%%%%%%%%%%%%%%%%%%%%%%%%%%%%%%%%%%%%%%%%%%%%%%%%%%%%%%%%%

\section{Introduction} \label{sec1}

Over the last 25 years the study of algebraic, homological and combinatorial
properties of powers of ideals has been one of the major topics in Commutative
Algebra. In this paper we study the minimal free resolution of the powers of cover ideal of graphs. Cover ideal of a graph is the Alexander dual of its edge ideal and has been studied by several authors (see e.g., \cite{bbv}, \cite{cpsty}, \cite{fhv}, \cite{fhv1}, \cite{m}).

In Section \ref{sec2}, we study the minimal free resolution of symbolic powers of cover ideal of very well-covered graphs. A graph $G$ is said to be very well-covered if the cardinality of every maximal independent set of $G$ is half of the number of vertices of $G$. The family of very well-covered graphs is rich, because it includes all the unmixed bipartite graphs, which have no isolated vertex. This class of graphs is studied from algebraic point of view in \cite{crt}, \cite{kty} \cite{mmcrty}. Let $G$ be a graph with $n$ vertices and $S=\mathbb{K}[x_1,\dots,x_n]$ be the
polynomial ring in $n$ variables over a field $\mathbb{K}$. Assume that $J(G)$ is the cover ideal of $G$. The first main result of Section \ref{sec2} determines a class of monomial ideals, such that all symbolic powers of an ideal in this class have linear resolution. In fact, There are many attempts to characterize the monomial ideals with linear resolution. One of the most important results in this direction is due to Fr${\rm \ddot{o}}$berg \cite[Theorem 1]{f}, who characterized all squarefree monomial ideals generated by quadratic monomials, which have linear resolution. It is also known \cite{ht} that polymatroidal ideals have linear resolution and that powers
of polymatroidal ideals are again polymatroidal (see \cite{hh}). In particular
they have again linear resolution. In general however, powers of ideals with linear
resolution need not to have linear resolution. The first example of such an ideal
was given by Terai. He showed that over a base field of characteristic $\neq 2$ the Stanley
Reisner ideal $I = (x_1x_2x_3, x_1x_2x_5, x_1x_3x_6, x_1x_4x_5, x_1x_4x_6, x_2x_3x_4, x_2x_4x_6, x_2x_5x_6, x_3x_4x_5, x_3x_5x_)$ of the minimal
triangulation of the projective plane has linear resolution, while $I^2$ has not linear
resolution. This example depends on the characteristic of the base field. If the base
field has characteristic $2$, then $I$ itself has not linear resolution.
Another example, namely $I = (x_4x_5x_6, x_3x_5x_6, x_3x_4x_6, x_3x_4x_5, x_2x_5x_6, x_2x_3x_4, x_1x_3x_6, x_1x_4x_5)$ is given by
Sturmfels \cite{s}. Again $I$ has linear resolution, while $I^2$ has not linear resolution. However, Herzog, Hibi and Zheng \cite{hhz} prove that a monomial ideal $I$ generated in degree $2$ has linear resolution if and only if every power of $I$ has linear resolution. Also, it follows from \cite[Theorem 2.2]{mm} that if $G$ is a bipartite graph such that $J(G)$ has linear resolution, then every power of $J(G)$ has linear resolution too. Our Theorem \ref{main} is a generalization of this result and asserts that if $G$ is a very well-covered graph, such that $J(G)$ has linear resolution, then for every integer $k\geq 1$, the $k$-th symbolic power of $J(G)$, denoted by $J(G)^{(k)}$, has linear resolution and even more, it has linear quotients. In order to prove this result, in Proposition \ref{very-well}, we introduce a construction to obtain a Cohen--Macaulay very well-covered graph from a given one. We will see that the cover ideal of the resulting graph is related to the symbolic powers of the cover ideal of the primary graph, via polarization. In Corollary \ref{linbi}, we prove that the converse of Theorem \ref{main} is true for bipartite graphs. In other words, for a bipartite graph $G$, the cover ideal $J(G)$ has linear resolution if and only if $J(G)^{(k)}$ has linear resolution for some integer $k\geq 1$. Next, in Corollary \ref{sdepth}, we prove that if $G$ is a very well-covered graph such that $J(G)$ has linear resolution, then for every integer $k\geq 1$, the modules $J(G)^{(k)}$ and $S/J(G)^{(k)}$ satisfy Stanley's inequality, i.e., their Stanley depth is an upper bound for their depth. In the proof of Corollary \ref{sdepth}, we use the result obtained in \cite{mmcrty}, which states that for very well-covered graphs the notions of Cohen--Macaulayness and vertex decomposability are the same. As last result of Section \ref{sec2}, we study the depth of the symbolic powers of the cover ideal of a very well-covered graph. In \cite[Theorem 3.2]{cpsty}, it is shown that the depth of the symbolic powers of the cover ideal of a bipartite graph is a non-increasing sequence. In Theorem \ref{depthsym}, we prove the same for every very well-covered graphs.

Computing and finding bounds for the regularity of powers of a monomial ideal have been studied by a number of researchers (see for example \cite{b''}, \cite{bht}, \cite{c'}, \cite{htt}). It is well-known that ${\rm reg}(I^{s})$ is asymptotically a linear function for $s\gg0$. However, it is usually difficult to compute this linear function or estimate it. In Section \ref{sec3}, we study the regularity of (ordinary) powers of cover ideal of a bipartite graph (note that by \cite[Corollary 2.6]{grv}, for the cover ideal of bipartite graphs the ordinary and symbolic powers coincide). In Theorem \ref{regbi}, we determine a linear upper bound for the regularity of these ideals. More explicit, we prove that for a bipartite graph $G$ and every integer $k\geq 1$, the regularity of $S/J(G)^k$ is at most $k{\rm deg}(J(G))+{\rm reg}(S/J(G))-1$, where for a monomial ideal $I$, we denote the maximum degree of minimal monomial  generators of $I$ by ${\rm deg}(I)$.

%%%%%%%%%%%%%%%%%%%%%%%%%%%%%%%%%%%%%%%%%%%%%%%%%%%%%%%%%%%%%%%%%%%%%%%%%%

\section{Preliminaries} \label{sec1'}

In this section, we provide the definitions and basic facts which will be used in the next sections. We refer the reader to \cite{hh} for undefined terminologies.

Let $G$ be a simple graph with vertex set $V(G)=\big\{x_1, \ldots,
x_n\big\}$ and edge set $E(G)$ (by abusing the notation, we identify the vertices of $G$ with the variables of $S$). For a vertex $x_i$, the {\it neighbor set} of $x_i$ is $N_G(x_i)=\{x_j\mid x_ix_j\in E(G)\}$ and We set $N_G[x_i]=N_G(x_i)\cup \{x_i\}$ and call it the {\it closed neighborhood} of $x_i$. For a subset $F\subseteq V(G)$, we set $N_G[F]=\cup_{x_i\in F}N_G[x_i]$. For every subset $A\subset V(G)$, the graph $G\setminus A$ is the graph with vertex set $V(G\setminus A)=V(G)\setminus A$ and edge set $E(G\setminus A)=\{e\in E(G)\mid e\cap A=\emptyset\}$. A subgraph $H$ of $G$ is called induced provided that two vertices of $H$ are adjacent if and only if they are adjacent in $G$. A {\it matching} in a graph $G$ is a subgraph consisting of pairwise disjoint edges. If the subgraph is an induced subgraph, the matching is an {\it induced matching}. The cardinality of the maximum induced matching of $G$ is denoted by ${\rm indmatch}(G)$. A subset $W$ of $V(G)$ is called an {\it independent subset} of $G$ if there are no edges among the vertices of $W$. The graph $G$ is said to be {\it very well-covered} if $n$ is an even number and every maximal independent subset of $G$ has cardinality $n/2$. The {\it independence simplicial complex} of $G$ is defined by
$$\Delta(G)=\{A\subseteq V(G)\mid A \,\, \mbox{is an independent set in}\,\,
G\}.$$Note that Stanley--Reisner ideal of $\Delta(G)$ is the {\it edge ideal} of $G$ which is defined to be$$I(G)=(x_ix_j\mid \{x_1, x_j\}\in E(G))\subset S.$$
A subset $C$ of $V(G)$ is called a {\it vertex cover} of the graph $G$ if every edge of $G$ is incident to at least one vertex of $C$. A vertex cover $C$ is called a {\it minimal vertex cover} of $G$ if no proper subset of $C$ is a vertex cover of $G$. Note that $C$ is a minimal
vertex cover if and only if $V(G)\setminus C$ is a maximal independent set,
that is, a facet of $\Delta(G)$. A graph $G$ is called {\it unmixed} if all
minimal vertex covers of $G$ have the same number of elements. The size of the smallest vertex cover of $G$ will be denoted by $\beta'(G)$. The Alexander dual of the edge ideal of $G$ in $S$, i.e., the
ideal $$J(G)=I(G)^{\vee}=\bigcap_{\{x_i,x_j\}\in E(G)}(x_i,x_j),$$ is called the
{\it cover ideal} of $G$ and is the main objective of this paper. The reason for this name is due to the
well-known fact that the generators of $J(G)$ correspond to minimal vertex covers of $G$.

Let $I$ be a squarefree monomial ideal in $S$ and suppose that $I$ has the irredundant
primary decomposition $$I=\frak{p}_1\cap\ldots\cap\frak{p}_r,$$ where every
$\frak{p}_i$ is an ideal of $S$ generated by a subset of the variables of
$S$. Let $k$ be a positive integer. The $k$th {\it symbolic power} of $I$,
denoted by $I^{(k)}$, is defined to be $$I^{(k)}=\frak{p}_1^k\cap\ldots\cap
\frak{p}_r^k.$$

For a graded $S$-module $M$, we denote the graded Betti numbers of $M$ by $\beta_{i,j}(M)$. The Castelnuovo--Mumford regularity (or simply, regularity) of $M$, denote by ${\rm reg}(M)$, is defined as follows.
$${\rm reg}(M)=\max\{j-i|\ \beta_{i,j}(M)\neq0\}.$$The module $M$ is said to
have {\it linear resolution}, if for some integer $d$, $\beta_{i,i+t}(M)=0$
for all $i$ and every $t\neq d$. It is clear from the definition that if a monomial ideal has a linear resolution, then all the minimal monomial generators of $I$ have the same degree. Next, we recall the definition of monomial ideals with linear quotients. We remind that for a monomial ideal $I$, the set of minimal monomial generators of $I$ is denoted by $G(I)$.

\begin{dfn}
Let $I$ be a monomial ideal. Assume that $u_1\prec u_2 \prec \ldots \prec u_m$ is a linear order on $G(I)$. We say that $I$ has {\it linear quotient with respect to $\prec$}, if for every $2\leq i\leq m$, the ideal $(u_1, \ldots, u_{i-1}):u_i$ is generated by a subset of variables. We say that $I$ has {\it linear quotient}, if it has linear quotient with respect to a linear order on $G(I)$.
\end{dfn}

It is known that if $I$ is a monomial ideal which is generated is a sing degree and has linear quotients, then it admits linear resolution. Monomial ideals with linear quotients are related to an important class of simplicial complexes, namely shellable simplicial complexes.

\begin{dfn}
A simplicial complex $\Delta$ is called
{\it shellable} if its facets can be arranged in linear order $F_1,F_2,
\ldots,F_t$ in such a way that the subcomplex $\langle F_1, \ldots,F_{k-1}
\rangle \cap \langle F_k\rangle$ is pure and has dimension $\dim F_k-1$ for
every $k$ with $2\leq k\leq t$.
\end{dfn}

By \cite[Theorem 8.2.5]{hh}, a simplicial complex $\Delta$ is shellable if and only if $I_{\Delta^{\vee}}$ has linear quotients, where $\Delta^{\vee}$ is the Alexander dual of $\Delta$. A simplicial complex $\Delta$ is called to be Cohen--Macaulay if its Stanley--Reisner ring $\mathbb{K}[\Delta]:=S/I_{\Delta}$ is a Cohen--Macaulay ring. A fundamental result in combinatorial commutative algebra says that a pure shellabe simplicial complex is Cohen--Macaulay. Also, it follows from Eagon--Reiner Theorem \cite[Theorem 8.1.9]{hh}, that a simplicial complex $\Delta$ is Cohen--Macaulay if and only if $I_{\Delta^{\vee}}$ has linear resolution.

Let $\Delta$ be a simplicial complex. The {\it link}
of $\Delta$ with respect to a face $F \in \Delta$, denoted by ${\rm lk_
{\Delta}}(F)$, is the simplicial complex ${\rm lk_{\Delta}}(F)=\{G
\subseteq [n]\setminus F\mid G\cup F\in \Delta\}$ and the {\it deletion} of
$F$, denoted by ${\rm del_{\Delta}}(F)$, is the complex ${\rm
del_{\Delta}}(F)=\{G \subseteq [n]\setminus F\mid G \in \Delta\}$. When $F
= \{x\}$ is a single vertex, we abuse notation and write ${\rm lk_{\Delta
}}(x)$ and ${\rm del_{\Delta}}(x)$. We are now ready to define vertex decomposable simplicial complexes which will be used in the proof of Corollary \ref{sdepth}.

\begin{dfn}
Let $\Delta$ be a simplicial complex. Then we say that $\Delta$ is {\it vertex decomposable} if either
\begin{itemize}
\item[(1)] $\Delta$ is a simplex, or\\[-0.3cm]

\item[(2)] $\Delta$ has a vertex $x$ such that ${\rm del_{\Delta}}(x)$ and
    ${\rm lk_{\Delta}}(x)$ are vertex decomposable and every facet of
    ${\rm del_{\Delta}}(x)$ is a facet of $\Delta$.
\end{itemize}
\end{dfn}

A graph $G$ is called Cohen--Macaulay/shellable/vertex decomposable if $\Delta(G)$ has the same property. Thus, the graph $G$ is Cohen--Macaulay if and only if $J(G)$ has linear resolution and it is shellable if and only if $J(G)$ has linear quotients. We know from \cite[Theorem 1.1]{mmcrty} that for very well-covered graphs, the concepts of Cohen--Macaulayness, shellability and vertex decomposability are equivalent.

%%%%%%%%%%%%%%%%%%%%%%%%%%%%%%%%%%%%%%%%%%%%%%%%%%%%%%%%%%%%%%%%%%%%%%%%%%

\section{Very well-covered graphs} \label{sec2}

The aim of this section is to study the minimal free resolution of symbolic powers of cover ideal of very well-covered graphs. We first prove in Theorem \ref{main} that if $G$ is a very well-covered graph such that $J(G)$ has linear resolution, then every symbolic power $J(G)^{(k)}$ has linear resolution too. In order to prove this result, we introduce a construction to obtain a Cohen--Macaulay very well-covered graph from a given one. In the following construction, for every graph $G$ and every integer $k\geq 1$, we build a new graph $G_k$ whose cover ideal is strongly related to the $k$-th symbolic power of $J(G)$ (see Lemma \ref{sympol}).

{\bf Construction.} Let $G$ be a graph with vertex set $V(G)=\{x_1, \ldots, x_n\}$ and let $k\geq 1$ be an integer. We define the new graph $G_k$ on new vertices $$V(G_k)=\{x_{i,p}\mid 1\leq i\leq n \ {\rm and} \  1\leq p\leq k\},$$(thus $G_k$ has $nk$ vertices) and the edge set of $G_k$ is $$E(G_k)=\{\{x_{i,p}, x_{j,q}\}\mid \{x_i, x_j\}\in E(G) \  {\rm and} \  p+q\leq k+1\}.$$

\begin{prop} \label{very-well}
Let $G$ be a graph without isolated vertices and $k\geq 1$ be an integer.
\begin{itemize}
\item[(a)] If $G$ is very well-covered, then $G_k$ is very well-covered too.
\item[(b)] If $G$ is Cohen--Macaulay and very well-covered, then $G_k$ is Cohen--Macaulay too.
\end{itemize}
\end{prop}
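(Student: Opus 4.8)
The plan is to analyze the structure of the new graph $G_k$ via the ordering induced by the index $p$, and to relate independent sets and vertex covers of $G_k$ back to those of $G$. Before attacking either part, I would first understand the combinatorics of the construction: the edge condition $p+q\le k+1$ means that a copy $x_{i,p}$ with a small second index $p$ is adjacent to many copies of its $G$-neighbors, while copies with $p$ close to $k$ are adjacent to very few. In particular, the vertices $x_{i,k}$ (the ``top'' copies) are isolated in $G_k$ unless $G$ has an edge, and more generally the adjacency is monotone: if $\{x_{i,p},x_{j,q}\}\in E(G_k)$ and $p'\le p$, then $\{x_{i,p'},x_{j,q}\}\in E(G_k)$ as well. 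This monotonicity is the key structural feature I expect to exploit throughout.

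For part (a), recall that $G$ very well-covered means $G$ has no isolated vertices, $|V(G)|=n$ is even, and every maximal independent set has size $n/2$; equivalently every minimal vertex cover has size $n/2$. Since $|V(G_k)|=nk$, I must show every maximal independent set of $G_k$ has cardinality $nk/2$. The natural approach is to produce, from a minimal vertex cover $C$ of $G$ of size $n/2$, a minimal vertex cover of $G_k$ of size $nk/2$, and to argue that \emph{every} minimal vertex cover of $G_k$ arises this way with this cardinality. Given the monotonicity above, a vertex cover of $G_k$ should be determined by choosing, for each edge $\{x_i,x_j\}$ of $G$, a ``threshold'': the set $\{(p,q):p+q\le k+1\}$ is exactly the covering constraint, so if the chosen copies of $x_i$ are those with index $\ge$ some cutoff $a_i$ and similarly $\ge a_j$ for $x_j$, the edge $\{x_i,x_j\}$ is covered in $G_k$ precisely when $a_i+a_j\le k+1$. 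I would set up a bijection-type argument translating ``threshold'' covers of $G_k$ into weightings on $V(G)$, and use the very-well-covered hypothesis on $G$ (that the vertex covers balance) to force the total count to be $nk/2$. The mild obstacle here is checking that every minimal cover of $G_k$ really is of threshold form; I expect the monotonicity of adjacency to imply that minimal covers are ``upward closed'' in the index $p$, which reduces the problem to a counting statement on $G$.

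For part (b), since $G_k$ is already very well-covered by part (a), the equivalence quoted from \cite[Theorem 1.1]{mmcrty} lets me prove Cohen--Macaulayness by instead proving that $G_k$ is \emph{vertex decomposable}, which is usually the most tractable of the three equivalent properties. The strategy is induction, peeling off a well-chosen vertex (a so-called shedding vertex) and verifying that both the deletion and the link are again vertex decomposable of the same type. The natural candidate to shed is a top copy $x_{1,k}$ or a leaf-like vertex, whose link and deletion should again be instances of the construction (or a Cohen--Macaulay very well-covered graph on fewer vertices), so that the inductive hypothesis applies. Concretely, I would use the hypothesis that $G$ itself is Cohen--Macaulay and very well-covered; by \cite{mmcrty} such a $G$ is vertex decomposable, and I would transport its shedding vertex $x_1$ (with its simplicial partner) up to $G_k$, showing that deleting and linking the corresponding copies yields graphs of the form $(G\setminus\cdots)_k$ or $(G\setminus\cdots)_{k'}$ to which induction applies.

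The hard part will be part (b): setting up the induction so that $\operatorname{del}$ and $\operatorname{lk}$ of the chosen vertex in $\Delta(G_k)$ are recognized as independence complexes of smaller graphs that are again very well-covered and Cohen--Macaulay. The difficulty is bookkeeping --- the link of a top copy removes that vertex's closed neighborhood, which cuts across all the index levels in a way governed by the $p+q\le k+1$ condition, and one must check that the resulting graph is precisely of the form needed to invoke either the inductive hypothesis on smaller $G$ or on smaller $k$, together with the condition that every facet of the deletion remains a facet. I would therefore isolate a single ``pure shedding'' vertex whose combinatorics are clean (most likely a degree-one top copy attached to a simplicial vertex of $G$), verify the vertex-decomposability recursion for it, and let the structure of Cohen--Macaulay very well-covered graphs from \cite{mmcrty} --- in particular their characterization in terms of a perfect matching of simplicial-type edges --- supply the base case and the inductive step.
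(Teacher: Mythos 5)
Your opening observation is correct and genuinely useful: adjacency in $G_k$ is monotone in the second index, and consequently every minimal vertex cover of $G_k$ is of ``threshold form'' (if $x_{i,p}\in C$ but $x_{i,p'}\notin C$ with $p'<p$, then all neighbors of $x_{i,p'}$, hence all neighbors of $x_{i,p}$, lie in $C$, contradicting minimality). This identifies minimal covers of $G_k$ with functions $a:V(G)\to\{0,\dots,k\}$ satisfying $a_i+a_j\geq k$ on edges and minimal with that property, of total weight $\sum_i a_i$; it is the combinatorial shadow of what the paper gets algebraically via polarization (Lemma \ref{sympol}). However, for part (a) this reduction is where your proof stops, not where it finishes. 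The statement you still need --- that \emph{every} minimal such weighting on a very well-covered $G$ has total weight exactly $kn/2$ --- is equivalent to part (a) itself and does not follow from ``the vertex covers balance.'' The lower bound $\sum_i a_i\geq kn/2$ does follow easily once one knows $G$ has a perfect matching, but the upper bound (minimality forces $a(w_i)+a(z_i)=k$ on each matching edge) requires the structural characterization of very well-covered graphs: a perfect matching $\{w_i,z_i\}$ together with the conditions that $\{w_i,z_j\}\in E(G)$ excludes $\{w_i,w_j\}\in E(G)$ and the transitivity-type condition on edges (properties (iii)--(iv) of \cite{crt}, \cite{mmcrty}). The paper's proof consists precisely of verifying these conditions for $G_k$ (its Steps 1--3) and invoking the unmixedness criterion of \cite[Theorem 2.9]{mrv}; without some equivalent of this work your counting claim is unsupported, and I see no shortcut that avoids it.

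Part (b) has a more serious problem: the proposed induction would fail as described. You want to shed a ``top copy'' $x_{1,k}$ and recognize $\mathrm{del}_{\Delta(G_k)}(x_{1,k})$ and $\mathrm{lk}_{\Delta(G_k)}(x_{1,k})$ as independence complexes of graphs of the form $(G\setminus\cdots)_{k}$ or $(G\setminus\cdots)_{k'}$. But removing the closed neighborhood of $x_{1,k}$ deletes $x_{1,k}$ together with $x_{j,1}$ for every $x_j\in N_G(x_1)$, so the surviving vertex set is no longer ``rectangular'': different vertices of $G$ retain different numbers of copies, and the remaining graph is not of the form $H_{k'}$ for any induced subgraph $H$ of $G$ and any $k'$. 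Hence there is no inductive hypothesis to apply, and you correctly flag this bookkeeping as the hard part but do not resolve it --- so what is written is a plan, not a proof. The paper sidesteps induction entirely: by \cite[Lemma 3.1]{mmcrty}, a very well-covered graph is Cohen--Macaulay if and only if its vertices admit a labeling satisfying (i)--(iv) plus the order condition (v) ($\{w_i,z_j\}\in E$ implies $i\leq j$), and the paper simply exhibits such a labeling of $G_k$ (its Step 4: $p+(k+1-r)\leq k+1$ gives $p\leq r$, and (v) for $G$ gives $m\leq q$). If you want to salvage your approach, the realistic fix is to abandon the link/deletion induction and instead prove the analogue of condition (v) for a suitable ordering of $V(G_k)$, which is exactly the paper's route.
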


\begin{proof}
Since $G$ is very well-covered, $n=|V(G)|$ is an even integer. Set $h=n/2$. By \cite[Lemma 4.1]{mmcrty} and \cite[Proposition 2.3]{crt}, the vertices of $G$ can be relabeled, say $V(G)=\{w_1, \ldots, w_h, z_1, \ldots, z_h\}$ such that
\begin{itemize}
\item[(i)] $\{w_1, \ldots, w_h\}$ is a minimal vertex cover of $G$ and $\{z_1, \ldots, z_h\}$ is a maximal independent set of $G$;
\item[(ii)] $\{w_1, z_1\}, \ldots, \{w_h, z_h\}\in E(G)$;
\item[(iii)] if $\{y_i, w_j\}, \{z_j, w_l\}\in E(G)$, then $\{y_i, w_l\}\in E(G)$ for distinct $i, j, l$ and for $y_i\in \{w_i, z_i\}$;
\item[(iv)] if $\{w_i, z_j\}\in E(G)$, then $\{w_i, w_j\}\notin E(G)$.
\end{itemize}
We rename the vertices of $G_k$ as follows.
\begin{align*}
& a_1:=w_{1,1}, \ a_2:=w_{2,1}, \ldots, a_h:=w_{h,1}, \ a_{h+1}:=w_{1,2}, \ldots, a_{2h}:=w_{h,2}, \ldots, a_{kh}:=w_{h,k},\\ & b_1:=z_{1,k}, \ b_2:=z_{2,k}, \ldots,  b_h:=z_{h,k}, \ b_{h+1}:=z_{1,k-1}, \ldots, b_{2h}:=z_{h,k-1}, \ldots, b_{kh}:=z_{h,1}.
\end{align*}
It is clear from (i) and the construction of $G_k$ that $\{a_1, a_2, \ldots, a_{kh}\}$ is a minimal vertex cover of $G_k$ and $\{b_1, b_2, \ldots, b_{kh}\}$ is a maximal independent set of $G_k$. This shows that $\beta'(G_k)\leq kh$. Also, it follows from (ii) that $\{\{a_1, b_1\}, \ldots, \{a_{kh},b_{kh}\}\}$ is a perfect matching of $G_k$. Therefore $\beta'(G_k)=kh$. This implies that ${\rm ht}(I(G_k))=kh$.

Assume that $i, j, l$ are distinct integers with $1\leq i, j, l\leq kh$. Then there exist integers $m,p,q,r,s,t$ such that $a_i=w_{m,p}, b_i=z_{m, k+1-p}, a_j=w_{q,r}, b_j=z_{q, k+1-r}$ and $a_l=w_{s,t}$. We continue the proof in several steps.\\

{\bf Step 1.} If $\{a_i, a_j\}, \{b_j, a_l\}\in E(G_k)$, then $\{a_i,a_l\}\in E(G_k)$.

{\it Proof.} It follows from the assumptions that $\{w_m, w_q\}, \{w_s, z_q\}\in E(G), \ p+r\leq k+1$ and $k+1-r+t\leq k+1$. Hence, $p+t=(p+r)+(k+1-r+t)-(k+1)\leq k+1$. Since $\{w_m, w_q\}\in E(G)$, we conclude that $m\neq q$. It also follows from (iv) that $s\neq m$. If $s=q$, then $\{w_m, w_s\}=\{w_m, w_q\}\in E(G)$. Thus, $\{a_i,a_l\}$ is an edge of $G_k$. If $s\neq q$, then it follows from (iii) that $\{w_m, w_s\}$ is an edge of $G$. Thus, again $\{a_i,a_l\}\in E(G_k)$.\\

{\bf Step 2.} If $\{b_i, a_j\}, \{b_j, a_l\}\in E(G_k)$, then $\{b_i, a_l\}\in E(G_k)$.

{\it Proof.} It follows from the assumptions that $\{z_m, w_q\}, \{w_s, z_q\}\in E(G), \ k+1-p+r\leq k+1$ and $k+1-r+t\leq k+1$. Hence, $(k+1-p)+t=(k+1-p+r)+(k+1-r+t)-(k+1)\leq k+1$. If $s=q$, then it follows from $\{z_m, w_s\}=\{z_m, w_q\}\in E(G)$ that $\{b_i, a_l\}\in E(G_k)$. Therefore, assume that $s\neq q$. Similarly, we can assume that $m\neq q$. If $s=m$, it follows from (ii) that $\{z_m, w_s\}=\{z_m, w_m\}\in E(G)$, which implies that $\{b_i, a_l\}\in E(G_k)$. Thus, suppose that $s\neq m$.  Hence, $m,q$ and $s$ are distinct. Then (iii) implies that $\{z_m, w_s\}$ is an edge of $G$. Thus, again $\{a_i,a_l\}\in E(G_k)$.\\

{\bf Step 3.} If $\{a_i, b_j\}\in E(G_k)$, then $\{w_m,z_q\}\in E(G)$ and it follows from (iv) that $\{w_m,w_q\}\notin E(G)$. Thus, $\{a_i, a_j\}\notin E(G_k)$.\\

Now, Steps 1, 2, 3 and \cite[Theorem 2.9]{mrv} (see also \cite[Proposition 2.3]{crt}) imply that $G$ is unmixed and since ${\rm ht}(I(G_k))=kh=|V(G)|/2$, we conclude that $G_k$ is a very well-covered graph.

Next assume that $G$ is a Cohen--Macaulay very well-covered graph. By \cite[Lemma 3.1]{mmcrty} there is a relabeling for the vertices of $G$ which satisfies conditions (i)-(iv), mentioned above and the following condition.
\begin{itemize}
\item[(v)] If $\{w_i, z_j\}\in E(G)$, then $i\leq j$.
\end{itemize}

{\bf Step 4.} If $\{a_i,b_j\}\in E(G_k)$, then $i\leq j$.

{\it Proof.} It follows from the assumption that $\{w_m, z_q\}\in E(G)$ and $p+(k+1-r)\leq k+1$. Thus $p\leq r$ and It follow from (v) that $m\leq q$. Therefor $i\leq j$.

Finally, it follows from Steps 1, 2, 3, 4 and \cite[Lemma 3.1]{mmcrty} that $G_k$ is Cohen--Macaulay.
\end{proof}

\begin{rem}
Although we proved in Proposition \ref{very-well} that for every very well-covered graph $G$, the graph $G_k$ is also very well-covered graph, but it is not in general true that $G_k$ is unmixed if $G$ is. For example, let $G=K_3$ be the complete graph with three vertices. Then $G$ is unmixed (even Cohen--Macaulay) but $G_2$ is not unmixed.
\end{rem}

We next remind the definition of polarization. It is a very useful machinery to convert a monomial ideal to a squarefree one.

\begin{dfn} \label{pol}
Let $I$ be a monomial ideal of
$S=\mathbb{K}[x_1,\ldots,x_n]$ with minimal generators $u_1,\ldots,u_m$,
where $u_j=\prod_{i=1}^{n}x_i^{a_{i,j}}$, $1\leq j\leq m$. For every $i$
with $1\leq i\leq n$, let $a_i=\max\{a_{i,j}\mid 1\leq j\leq m\}$, and
suppose that $$T=\mathbb{K}[x_{1,1},x_{1,2},\ldots,x_{1,a_1},x_{2,1},
x_{2,2},\ldots,x_{2,a_2},\ldots,x_{n,1},x_{n,2},\ldots,x_{n,a_n}]$$ is a
polynomial ring over the field $\mathbb{K}$. Let $I^{{\rm pol}}$ be the squarefree
monomial ideal of $T$ with minimal generators $u_1^{{\rm pol}},\ldots,u_m^{{\rm pol}}$, where
$u_j^{{\rm pol}}=\prod_{i=1}^{n}\prod_{k=1}^{a_{i,j}}x_{i,k}$, $1\leq j\leq m$. The
monomial $u_j^{{\rm pol}}$ is called the {\it polarization} of $u_j$, and the ideal $I^{{\rm pol}}$
is called the {\it polarization} of $I$.
\end{dfn}

As we mentioned at the beginning of this section, for every graph $G$ and every integer $k\geq 1$, the cover ideal of $G_k$ is related to the $k$-th symbolic power of the cover ideal of $G$. This is the content of the following lemma.

\begin{lem} \label{sympol}
Let $G$ be a graph. For every integer $k\geq 1$, the ideal $(J(G)^{(k)})^{{\rm pol}}$ is the cover ideal of $G_k$.
\end{lem}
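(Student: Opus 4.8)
The plan is to identify the minimal monomial generators of $(J(G)^{(k)})^{{\rm pol}}$ with the generators of $J(G_k)$, that is, with the minimal vertex covers of $G_k$. First I would record two elementary descriptions. Since $J(G)=\bigcap_{\{x_i,x_j\}\in E(G)}(x_i,x_j)$ is the irredundant primary decomposition of the cover ideal, we have $J(G)^{(k)}=\bigcap_{\{x_i,x_j\}\in E(G)}(x_i,x_j)^k$, and a monomial $\prod_{\ell}x_\ell^{c_\ell}$ lies in $(x_i,x_j)^k$ precisely when $c_i+c_j\ge k$; hence $\prod_\ell x_\ell^{c_\ell}\in J(G)^{(k)}$ if and only if $c_i+c_j\ge k$ for every edge $\{x_i,x_j\}\in E(G)$. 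On the other hand, by Definition \ref{pol} the polarization of $\prod_\ell x_\ell^{c_\ell}$ is the squarefree monomial $\prod_\ell\prod_{p=1}^{c_\ell}x_{\ell,p}$, so polarizing replaces the exponent vector $(c_\ell)$ by the family of initial segments $\{1,\dots,c_\ell\}$ inside the $\ell$-th column $\{x_{\ell,1},\dots,x_{\ell,k}\}$ of $V(G_k)$. Here I would also note that for each non-isolated vertex $x_i$ the largest exponent of $x_i$ in a minimal generator of $J(G)^{(k)}$ equals $k$ (take an edge $\{x_i,x_j\}$ and a cover with $c_j=0$, which forces $c_i=k$), so the polarization ring is exactly $\mathbb{K}[V(G_k)]$; isolated vertices contribute nothing to either ideal and may be ignored.

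The heart of the argument is a structural property of the minimal vertex covers of $G_k$, coming from the staircase condition $p+q\le k+1$. For a vertex cover $C$ of $G_k$ write $S_i=\{p:x_{i,p}\in C\}$. I claim that if $C$ is minimal then each $S_i$ is an initial segment $\{1,\dots,c_i\}$. Equivalently, the complementary maximal independent set $I=V(G_k)\setminus C$ has each trace $\{p:x_{i,p}\in I\}$ upward closed. This follows from the domination relation $N_{G_k}(x_{i,p'})\subseteq N_{G_k}(x_{i,p})$ whenever $p\le p'$ (if $x_{i,p'}x_{j,q}$ is an edge then $p+q\le p'+q\le k+1$, so $x_{i,p}x_{j,q}$ is an edge too) together with the fact that $x_{i,p}$ and $x_{i,p'}$ are never adjacent: if $x_{i,p}\in I$ and $p'\ge p$, then $x_{i,p'}$ has no neighbor in $I$, so maximality of $I$ forces $x_{i,p'}\in I$.

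With this in hand I would finish by translating the covering condition. A down-set cover $C$ with parameters $(c_i)$ fails to cover some edge exactly when there are $p\ge c_i+1$ and $q\ge c_j+1$ with $p+q\le k+1$ for some $\{x_i,x_j\}\in E(G)$; since the least possible value of $p+q$ is $(c_i+1)+(c_j+1)$, the set $C$ is a vertex cover if and only if $c_i+c_j\ge k$ for every edge, which is exactly the membership criterion for $\prod_\ell x_\ell^{c_\ell}\in J(G)^{(k)}$ recorded above. As for down-sets the inclusion $C\subseteq C'$ is the same as $c_i\le c_i'$ for all $i$, which is the same as divisibility $\prod_\ell x_\ell^{c_\ell}\mid\prod_\ell x_\ell^{c_\ell'}$, the minimal vertex covers of $G_k$ correspond bijectively to the minimal monomial generators of $J(G)^{(k)}$; and under this bijection the cover $\{x_{i,p}:p\le c_i\}$ is precisely the polarization $\prod_\ell\prod_{p=1}^{c_\ell}x_{\ell,p}$ of $\prod_\ell x_\ell^{c_\ell}$. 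Therefore $(J(G)^{(k)})^{{\rm pol}}=J(G_k)$.

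The step I expect to require the most care is the down-set claim of the second paragraph: the remaining work is the routine dictionary between monomials and vertex sets, but the claim is where the specific shape of $E(G_k)$ — the inequality $p+q\le k+1$, which forces the nesting of neighborhoods along a column — is genuinely used, and it is what rules out any non-polarized minimal vertex cover. I would be careful to argue minimality correctly there, namely that a cover which is minimal \emph{among down-sets} is actually a minimal vertex cover; this holds because any strictly smaller vertex cover contains a minimal one, which is again a down-set, contradicting minimality among down-sets.
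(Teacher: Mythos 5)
Your proof is correct, but it follows a genuinely different route from the paper's. The paper disposes of the lemma in a few lines at the level of primary decompositions: it cites Faridi's results that polarization commutes with intersections and that $((x_i,x_j)^k)^{{\rm pol}}=\bigcap_{p+q\leq k+1}(x_{i,p},x_{j,q})$, so that $(J(G)^{(k)})^{{\rm pol}}=\bigcap_{\{x_i,x_j\}\in E(G)}\bigcap_{p+q\leq k+1}(x_{i,p},x_{j,q})$, which is visibly the decomposition of $J(G_k)$ read off from the definition of $E(G_k)$; no analysis of vertex covers of $G_k$ is needed at all. You instead argue at the level of minimal generators: you characterize membership in $J(G)^{(k)}$ by the inequalities $c_i+c_j\geq k$ over the edges, prove the structural claim that every minimal vertex cover of $G_k$ is a column-wise initial segment (via the nesting $N_{G_k}(x_{i,p'})\subseteq N_{G_k}(x_{i,p})$ for $p\leq p'$ and maximality of the complementary independent set), and then match minimal down-set covers with minimal generators. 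The steps check out, including the two points that genuinely need care and that you handle correctly: the maximality argument forcing upward-closed column traces, and the observation that a cover minimal among down-set covers is minimal among all covers (since any smaller cover contains a minimal one, which is again a down-set). You also rightly verify that the maximal exponent of each non-isolated variable in the minimal generators of $J(G)^{(k)}$ is exactly $k$, so the polarization ring really is $\mathbb{K}[V(G_k)]$ --- a point the paper's citation-based proof never has to confront explicitly. What the paper's approach buys is brevity, at the cost of two external references; what yours buys is a self-contained, elementary argument (in effect a hands-on proof of the special case of Faridi's propositions that is needed here) together with explicit combinatorial information: the minimal vertex covers of $G_k$ are precisely the polarizations of the minimal generators of $J(G)^{(k)}$, i.e., they are staircase-shaped.
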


\begin{proof}
We know that polarization commutes with the intersection (see \cite[Proposition 2.3]{f1}). Therefore,$$(J(G)^{(k)})^{{\rm pol}}=\bigcap_{\{x_i, x_j\}\in E(G)}((x_i, x_j)^k)^{{\rm pol}}.$$Moreover, by \cite[Proposition 2.5]{f1}, it holds that$$((x_i, x_j)^k)^{{\rm pol}}=\bigcap_{p+q\leq k+1}(x_{i,p}, x_{j, q}).$$Thus,
$$(J(G)^{(k)})^{{\rm pol}}=\bigcap_{\{x_i, x_j\}\in E(G)} \ \bigcap_{p+q\leq k+1}(x_{i,p}, x_{j, q}).$$Therefore, $(J(G)^{(k)})^{{\rm pol}}$ is the cover ideal of $G_k$.
\end{proof}

We know from \cite[Corollary 1.6.3]{hh} that polarization preserves the graded Betti numbers. Thus a monomial ideal has linear resolution if and only if its polarization has linear resolution. In the following lemma, we show that a similar statement is true if one replaces the linear resolution by linear quotients.

\begin{lem} \label{linq}
A monomial ideal $I$ has linear quotients if and only if $I^{{\rm pol}}$ has linear quotients.
\end{lem}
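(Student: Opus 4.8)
The plan is to show that a chosen linear order on $G(I)$ giving linear quotients transfers, via polarization, to a linear order on $G(I^{\rm pol})$ giving linear quotients, and conversely. Since polarization is a bijection $u_j \mapsto u_j^{\rm pol}$ between the minimal generators of $I$ and those of $I^{\rm pol}$, any linear order $u_1 \prec u_2 \prec \cdots \prec u_m$ on $G(I)$ induces the order $u_1^{\rm pol} \prec u_2^{\rm pol} \prec \cdots \prec u_m^{\rm pol}$ on $G(I^{\rm pol})$, and vice versa. So the real content is a dictionary between the colon ideals $(u_1,\ldots,u_{i-1}):u_i$ in $S$ and $(u_1^{\rm pol},\ldots,u_{i-1}^{\rm pol}):u_i^{\rm pol}$ in $T$, under which ``generated by a subset of variables'' is preserved in both directions.

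The key computational step is to understand a single colon $(u):v$ under polarization, where $u,v$ are monomials. First I would recall that for two monomials $u,v$, the colon $(u):v = (u/\gcd(u,v))$ is principal, generated by $u/\gcd(u,v)$. The crucial observation is that the exponent of each variable $x_i$ in $u/\gcd(u,v)$ is $\max(a_i(u)-a_i(v),0)$, and that polarization turns $x_i^c$ into $x_{i,1}x_{i,2}\cdots x_{i,c}$, the product of the \emph{first} $c$ dummy variables in the $i$-th block. One checks directly that the generator of the colon $(u^{\rm pol}):v^{\rm pol}$, namely $u^{\rm pol}/\gcd(u^{\rm pol},v^{\rm pol})$, is precisely the polarization of $u/\gcd(u,v)$ \emph{truncated into the correct variable blocks}: in block $i$ it is $x_{i,\,b_i+1}x_{i,\,b_i+2}\cdots x_{i,\,a_i(u)}$ where $b_i=\min(a_i(u),a_i(v))$, which is nonempty exactly when $a_i(u)>a_i(v)$. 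From this I extract the decisive fact: for a monomial $w$, the image $w^{\rm pol}$ is a single variable of $T$ if and only if $w$ is a single variable of $S$ (degree one in exactly one $x_i$). Hence the generator $u/\gcd(u,v)$ is a single variable $x_i$ precisely when the generator of $(u^{\rm pol}):v^{\rm pol}$ is the single variable $x_{i,1}$.

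Now I would assemble the two directions. For the forward direction, suppose $I$ has linear quotients with respect to $\prec$. Fix $i$ and a generator $x_\ell$ of the variable-generated ideal $(u_1,\ldots,u_{i-1}):u_i$; then for some $j<i$ the monomial $u_j/\gcd(u_j,u_i)$ is divisible by $x_\ell$, hence equals $x_\ell$ (as the colon ideal is generated by variables, the witnessing principal colon must already be a variable). By the dictionary, $u_j^{\rm pol}/\gcd(u_j^{\rm pol},u_i^{\rm pol}) = x_{\ell,1}$, so $x_{\ell,1} \in (u_1^{\rm pol},\ldots,u_{i-1}^{\rm pol}):u_i^{\rm pol}$; running over all such $\ell$ shows this colon is generated by variables of $T$. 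The converse direction is symmetric: if $(u_1^{\rm pol},\ldots,u_{i-1}^{\rm pol}):u_i^{\rm pol}$ is generated by variables $x_{\ell,p}$, then each such variable arises as $u_j^{\rm pol}/\gcd(u_j^{\rm pol},u_i^{\rm pol})$ being a single variable for some $j<i$, which by the dictionary forces $u_j/\gcd(u_j,u_i)=x_\ell$ and hence $x_\ell \in (u_1,\ldots,u_{i-1}):u_i$, giving linear quotients for $I$.

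The main obstacle I anticipate is the bookkeeping in the converse direction, where one must rule out the possibility that the colon $(u_1^{\rm pol},\ldots,u_{i-1}^{\rm pol}):u_i^{\rm pol}$ is generated by variables $x_{\ell,p}$ with $p>1$ (dummy variables other than the first in a block) without a corresponding honest variable in $S$. The resolution is to note that because the blocks are filled contiguously from index $1$ upward, $x_{\ell,p}$ for $p\geq 2$ can only appear in a colon generator together with $x_{\ell,1},\ldots,x_{\ell,p-1}$, so a variable-generated colon in $T$ forces the truncation pattern to start at $x_{\ell,1}$; this is exactly where the contiguous-block structure of Definition \ref{pol} does the work, and it is the point that must be argued carefully rather than by a one-line appeal.
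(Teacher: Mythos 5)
Your block-by-block formula for the colon under polarization is correct, but the ``decisive fact'' you extract from it is wrong, and both directions of your argument rest on it. If $u/\gcd(u,v)=x_\ell$, then the generator of $(u^{\rm pol}):v^{\rm pol}$ is not $x_{\ell,1}$ but $x_{\ell,\,b_\ell+1}$ with $b_\ell=\min(a_\ell(u),a_\ell(v))=a_\ell(v)$ --- this is exactly what your own formula $x_{i,b_i+1}\cdots x_{i,a_i(u)}$ says. A two-generator example already kills the claims built on the wrong index: take $I=(x_1^2,\,x_1x_2)$ with $u_1=x_1^2\prec u_2=x_1x_2$, so $(u_1):u_2=(x_1)$ and $I$ has linear quotients. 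Then $u_1^{\rm pol}=x_{1,1}x_{1,2}$, $u_2^{\rm pol}=x_{1,1}x_{2,1}$, and $(u_1^{\rm pol}):u_2^{\rm pol}=(x_{1,2})$. In particular $x_{1,1}\notin(u_1^{\rm pol}):u_2^{\rm pol}$, so your assertion that $x_{\ell,1}\in(u_1^{\rm pol},\ldots,u_{i-1}^{\rm pol}):u_i^{\rm pol}$ is false in general; and your resolution of the ``main obstacle'' --- that generators $x_{\ell,p}$ with $p\geq 2$ cannot occur because blocks are filled contiguously --- is also false, as the same example shows. Contiguity holds for the monomials $u_j^{\rm pol}$ themselves, not for colon quotients: the gcd strips off exactly the initial segment $x_{\ell,1},\ldots,x_{\ell,p-1}$, which is how $x_{\ell,p}$ with $p\geq 2$ appears alone. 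The paper's proof is organized around tracking this shift: with $t$ the largest exponent such that $x_p^t\mid u_i$, the witnessing variable in $T$ is $x_{p,t+1}$, not $x_{p,1}$.

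There is a second, independent gap. In both directions you only verify that certain variables \emph{lie in} the colon ideal; this does not prove the colon ideal is \emph{generated by} variables (compare: $y\in(x^2,y)$, yet $(x^2,y)$ is not generated by variables). You must show that every generator $u_j^{\rm pol}/\gcd(u_j^{\rm pol},u_i^{\rm pol})$, $j<i$, is divisible by one of the variables you produced, and symmetrically in the converse direction. This is precisely where the corrected index is needed: from $x_\ell\mid u_j/\gcd(u_j,u_i)$ and $t=a_\ell(u_i)$ one gets $x_\ell^{t+1}\mid u_j$, hence $x_{\ell,t+1}$ divides $u_j^{\rm pol}$ but not $\gcd(u_j^{\rm pol},u_i^{\rm pol})$, so it divides the quotient; with $x_{\ell,1}$ this divisibility fails whenever $t\geq 1$. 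The paper handles both points at once by invoking \cite[Lemma 8.2.3]{hh}: linear quotients is equivalent to the existence of an order such that for all $j<i$ there is $k<i$ with $u_k/\gcd(u_k,u_i)$ equal to a variable dividing $u_j/\gcd(u_j,u_i)$; the ``for every $j$ there is a witness $k$'' quantifier is exactly the missing ``generated by'' step, and the entire proof consists of transferring that condition through polarization via the shifted variable $x_{p,t+1}$. Your outline is salvageable --- replace $x_{\ell,1}$ by $x_{\ell,\,a_\ell(u_i)+1}$ throughout, delete the final paragraph, and add the divisibility check --- but as written both the key claim and the argument supporting it are incorrect.
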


\begin{proof}
We use the notations of Definition \ref{pol}.

($\Rightarrow$) By \cite[Lemma 8.2.3]{hh}, the elements of $G(I)$ can be ordered $u_1, \ldots , u_m$ such that for every pair of integers $j < i$ there exist an integer $k < i$ and a variable $x_p$ such that$$\frac{u_k}{{\rm gcd}(u_k, u_i)}=x_p \ \ \ \ \ {\rm and} \ \ \ \ \ x_p \ \ {\rm divides} \ \  \frac{u_j}{{\rm gcd}(u_j, u_i)}.$$Let $t$ be the biggest integer with $x_p^t|u_i$. It follows from the equality$$\frac{u_k}{{\rm gcd}(u_k, u_i)}=x_p$$that $x_p^{t+1}|u_k$ and $x_p^{t+2}\nmid u_k$, Therefore$$\frac{u_k^{{\rm pol}}}{{\rm gcd}(u_k^{{\rm pol}}, u_i^{{\rm pol}})}=x_{p,t+1}.$$On the other hand, $x_p$ divides$$\frac{u_j}{{\rm gcd}(u_j, u_i)}.$$by the choice of $t$, we conclude that $x_p^{t+1}|u_j$. This shows that $x_{p, t+1}$ divides$$\frac{u_j^{{\rm pol}}}{{\rm gcd}(u_j^{{\rm pol}}, u_i^{{\rm pol}})}.$$ It again follows from \cite[Lemma 8.2.3]{hh} that $I^{{\rm pol}}$ has linear quotients.

($\Leftarrow$) By \cite[Lemma 8.2.3]{hh}, the elements of $G(I^{{\rm pol}})$ can be ordered $u_1^{{\rm pol}}, \ldots , u_m^{{\rm pol}}$ such that for every pair of integers $j < i$ there exist an integer $k < i$ and a variable $x_{p,q}\in T$ such that$$\frac{u_k^{{\rm pol}}}{{\rm gcd}(u_k^{{\rm pol}}, u_i^{{\rm pol}})}=x_{p,q} \ \ \ \ \ {\rm and} \ \ \ \ \ x_{p,q} \ \ {\rm divides} \ \  \frac{u_j^{{\rm pol}}}{{\rm gcd}(u_j^{{\rm pol}}, u_i^{{\rm pol}})}.$$This shows that$$\frac{u_k}{{\rm gcd}(u_k, u_i)}=x_p \ \ \ \ \ {\rm and} \ \ \ \ \ x_p \ \ {\rm divides} \ \  \frac{u_j}{{\rm gcd}(u_j, u_i)},$$and hence, $I$ has linear quotients.
\end{proof}

We are know ready to prove the first main result of this section.

\begin{thm} \label{main}
Let $G$ be a very well-covered graph and suppose that its cover ideal $J(G)$ has linear resolution. Then
\begin{itemize}
\item[(i)] $J(G)^{(k)}$ has linear resolution, for every integer $k\geq 1$.
\item[(ii)] $J(G)^{(k)}$ has linear quotients, for every integer $k\geq 1$.
\end{itemize}
\end{thm}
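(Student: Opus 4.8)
The plan is to reduce the theorem to the Cohen--Macaulayness of the graph $G_k$ by combining the three lemmas that have just been established. The key observation is that all the machinery has been set up to relate $J(G)^{(k)}$ to the cover ideal of the auxiliary graph $G_k$ via polarization. Specifically, Lemma \ref{sympol} tells us that $(J(G)^{(k)})^{\mathrm{pol}} = J(G_k)$, and polarization is a bridge that preserves both linear resolution (by \cite[Corollary 1.6.3]{hh}, since it preserves graded Betti numbers) and linear quotients (by Lemma \ref{linq}). So it suffices to prove that $J(G_k)$ has linear resolution and linear quotients.

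First I would record the input hypothesis: since $G$ is very well-covered and $J(G)$ has linear resolution, the Eagon--Reiner theorem (the translation given in the Preliminaries) says precisely that $G$ is Cohen--Macaulay. Then by \cite[Theorem 1.1]{mmcrty}, for very well-covered graphs Cohen--Macaulayness, shellability and vertex decomposability are all equivalent, so $G$ is a Cohen--Macaulay very well-covered graph. Now I would invoke Proposition \ref{very-well}(b): since $G$ is Cohen--Macaulay and very well-covered, the graph $G_k$ is Cohen--Macaulay too (and part (a) guarantees it is very well-covered). Applying the same equivalence from \cite[Theorem 1.1]{mmcrty} in the other direction, $G_k$ is in fact shellable, hence its cover ideal $J(G_k)$ has linear quotients, and a fortiori linear resolution. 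This settles both conclusions for $J(G_k)$.

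To finish, I would transport these properties back across the polarization. For part (i): $J(G_k) = (J(G)^{(k)})^{\mathrm{pol}}$ has linear resolution, and since polarization preserves graded Betti numbers, $J(G)^{(k)}$ has linear resolution. For part (ii): $J(G_k)$ has linear quotients, so by the ``only if'' direction of Lemma \ref{linq} its depolarization $J(G)^{(k)}$ has linear quotients as well. Notice that (ii) formally implies (i) once one knows the generators of $J(G)^{(k)}$ lie in a single degree (which follows because they are minimal vertex covers counted with multiplicity and $G$ is unmixed), so the two parts are tightly linked, but presenting both via the polarization dictionary is cleanest.

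The genuine mathematical content has already been absorbed into Proposition \ref{very-well}, so the remaining obstacle here is essentially bookkeeping: I must make sure the chain of equivalences (linear resolution $\Leftrightarrow$ Cohen--Macaulay, Cohen--Macaulay $\Leftrightarrow$ shellable for very well-covered graphs, shellable $\Leftrightarrow$ linear quotients of the cover ideal) is applied to the correct graph at each stage and that the very-well-covered hypothesis survives the construction so that \cite[Theorem 1.1]{mmcrty} remains applicable to $G_k$. The only subtle point to double-check is that $G_k$ has no isolated vertices (needed to enter Proposition \ref{very-well}), which holds because $G$ has none and the construction connects $x_{i,1}$ to $x_{j,k}$ whenever $\{x_i,x_j\}\in E(G)$. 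Beyond that, the proof is a short assembly of the preceding results.
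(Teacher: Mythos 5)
Your proposal is correct and follows essentially the same route as the paper: Eagon--Reiner to get that $G$ is Cohen--Macaulay, Proposition \ref{very-well} to pass Cohen--Macaulayness (and very-well-coveredness) to $G_k$, \cite[Theorem 1.1]{mmcrty} to upgrade to shellability, and then Lemma \ref{sympol} together with \cite[Corollary 1.6.3]{hh} and Lemma \ref{linq} to transport linear resolution and linear quotients back across polarization. The only small correction: the no-isolated-vertices hypothesis in Proposition \ref{very-well} is on $G$ itself, not on $G_k$, and is arranged exactly as the paper does, by noting that isolated vertices have no effect on the cover ideal, so one may delete them from $G$ at the outset.
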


\begin{proof}
Since the isolated vertices have no effect on the cover ideal, we assume that $G$ has no isolated vertex. Since $J(G)$ has linear resolution, it follows from \cite[Theorem 8.1.9]{hh} that $G$ is a Cohen--Macaulay graph. Thus, by Proposition \ref{very-well}, the graph $G_k$ is Cohen--Macaulay.

(i) Notice that \cite[Theorem 8.1.9]{hh} and Lemma \ref{sympol} imply that $(J(G)^{(k)})^{{\rm pol}}=J(G_k)$ has linear resolution. Hence, it follows from \cite[Corollary 1.6.3]{hh} that $J(G)^{(k)}$ has linear resolution.

(ii) By \cite[Theorem 1.1]{mmcrty}, we know that every Cohen--Macaulay very well-covered graph is shellabe. Therefore, $G_k$ is a shellable graph and hence,   \cite[Theorem 8.2.5]{hh} and Lemma \ref{sympol} imply that $(J(G)^{(k)})^{{\rm pol}}=J(G_k)$ has linear quotients. We know conclude from Lemma \ref{linq} that $J(G)^{(k)}$ has linear quotients.
\end{proof}

In the following corollary, we prove that the converse of Theorem \ref{main} is true for bipartite graphs.

\begin{cor} \label{linbi}
Let $G$ be a bipartite graph and $k\geq 1$ be an integer. Then $J(G)$ has linear resolution if and only if $J(G)^k$ has linear resolution.
\end{cor}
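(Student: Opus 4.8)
The plan is to read both implications off the machinery already in place, with the forward direction essentially immediate and the converse requiring a genuine descent argument.

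For the forward implication, suppose $J(G)$ has linear resolution. After discarding isolated vertices (which do not affect $J(G)$), Eagon--Reiner \cite[Theorem 8.1.9]{hh} gives that $G$ is Cohen--Macaulay, hence unmixed; since $G$ is bipartite with no isolated vertex, it is then very well-covered. Thus Theorem \ref{main}(i) applies and $J(G)^{(k)}$ has linear resolution for every $k\geq1$, and because $G$ is bipartite the ordinary and symbolic powers agree, $J(G)^k=J(G)^{(k)}$ by \cite[Corollary 2.6]{grv}. So $J(G)^k$ has linear resolution, and this direction uses nothing new.

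The substance is the converse: assume $J(G)^k$ has linear resolution for some fixed $k\geq1$; again we may assume $G$ has no isolated vertex. The first step is to show $G$ is \emph{unmixed}. Since $J(G)^k$ has linear resolution it is generated in a single degree $d$, and $d=k\beta'(G)$ is the least degree of an element of $J(G)^k$. If $G$ admitted a minimal vertex cover $C$ with $|C|>\beta'(G)$, then $\big(\prod_{x\in C}x\big)^k$ would be a generator of degree $k|C|>d$, hence divisible by some degree-$d$ generator $g$ supported on $C$ with all exponents at most $k$; as $\sum$(exponents of $g$)$=k\beta'(G)<k|C|$, some $x_0\in C$ carries exponent $<k$, and minimality of $C$ supplies a private edge $\{x_0,y\}$ with $y\notin C$, so the exponent sum of $g$ on this edge is $<k$, contradicting $g\in J(G)^{(k)}=J(G)^k$. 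Hence $G$ is unmixed, thus very well-covered, and by Proposition \ref{very-well}(a) so is $G_k$.

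Next I would transport the hypothesis through polarization. Using $J(G)^k=J(G)^{(k)}$ together with Lemma \ref{sympol}, the polarization of $J(G)^k$ is $J(G_k)$; since polarization preserves graded Betti numbers \cite[Corollary 1.6.3]{hh}, $J(G_k)$ has linear resolution, so $G_k$ is Cohen--Macaulay by Eagon--Reiner. The remaining task is the \emph{descent}: to recover that the induced subgraph $G$ of $G_k$ is Cohen--Macaulay. This is the main obstacle, because Cohen--Macaulayness --- equivalently, by \cite[Theorem 1.1]{mmcrty}, vertex decomposability --- is \emph{not} inherited by arbitrary induced subgraphs, so one cannot simply say ``$G_k$ vertex decomposable $\Rightarrow$ $G$ vertex decomposable''. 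My plan is to use instead the partial-order description of Cohen--Macaulay very well-covered graphs from \cite{crt} and \cite{mmcrty}. With the canonical pairing of $G_k$ written as $\{w_{i,p},z_{i,k+1-p}\}$, the graph $G$ reappears as the induced subgraph of $G_k$ on $\{w_{i,1},z_{i,k}:1\le i\le h\}$ with matching $\{w_{i,1},z_{i,k}\}$, and the incidence $w_{i,1}z_{j,k}\in E(G_k)$ holds exactly when $w_iz_j\in E(G)$. Thus the relation attached to $G$ is precisely the restriction to this index set of the relation $(i,p)\mapsto(j,q)$, $w_{i,p}z_{j,k+1-q}\in E(G_k)$, governing $G_k$. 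For a Cohen--Macaulay very well-covered graph this relation is a partial order (the transitivity/compatibility encoded in conditions (iii) and (v)), and a restriction of a partial order to a subset is again a partial order; hence $G$ meets the same criterion and is Cohen--Macaulay, so $J(G)$ has linear resolution by Eagon--Reiner.

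I expect the descent of the last paragraph to be where the real work lies. One must verify carefully that the combinatorial criterion of \cite{crt,mmcrty} restricts cleanly along the embedding $G\hookrightarrow G_k$ --- equivalently, that the poset attached to $G$ is the induced subposet of the one attached to $G_k$ and that the canonical matchings are compatible under this embedding --- precisely because the poset (rather than any homological hereditariness) is what survives restriction.
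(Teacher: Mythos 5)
Your forward direction is exactly the paper's, and your polarization step (Lemma \ref{sympol} plus \cite[Corollary 1.6.3]{hh}, giving that $G_k$ is Cohen--Macaulay) is also the paper's; your preliminary unmixedness argument is correct but turns out to be unnecessary. The gap is in the descent, and it sits precisely where you flagged it. You assert that, $G_k$ being a Cohen--Macaulay very well-covered graph, the relation attached to the \emph{canonical} pairing $\{w_{i,p},z_{i,k+1-p}\}$ is a partial order, citing conditions (iii) and (v). But the characterization in \cite{crt} and \cite{mmcrty} is existential: $G_k$ is Cohen--Macaulay if and only if \emph{some} relabeling of its vertices --- some choice of cover/independent-set decomposition, of pairing, and of ordering --- satisfies (i)--(v). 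It does not say that the particular pairing inherited from the construction satisfies (v) after reordering, i.e.\ that \emph{its} relation is antisymmetric. To close this you would need an extra argument: for instance, that a Cohen--Macaulay bipartite (or very well-covered) graph has a \emph{unique} perfect matching (if $\{w_{\sigma(j)},z_j\}\in E$ for a permutation $\sigma$, then (v) forces $\sigma(j)\le j$, and summing over $j$ gives $\sigma=\mathrm{id}$), so the canonical pairing must agree with the good one; or, directly, that a failure of antisymmetry produces an induced $4$-cycle on two matching edges, which conditions (iv)--(v) forbid. Without something of this kind, ``hence $G$ meets the same criterion'' is not justified, and this is the step your proposal leaves open rather than proves.

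The paper's own descent avoids all of this machinery. Since $G_k$ has no isolated vertices, the set $F=\{x_{i,j}\mid 1\le i\le s,\ 2\le j\le k\}$ is an independent set of $G_k$ with $N_{G_k}[F]=F\cup\{y_{i,j}\mid 1\le i\le t,\ 1\le j\le k-1\}$, so $G_k\setminus N_{G_k}[F]\cong G$ and therefore $\Delta(G)={\rm lk_{\Delta(G_k)}}(F)$. Links of Cohen--Macaulay complexes are Cohen--Macaulay, so $G$ is Cohen--Macaulay and $J(G)$ has linear resolution by Eagon--Reiner. This is exactly the ``homological hereditariness'' you set aside: Cohen--Macaulayness is indeed not inherited by arbitrary induced subgraphs, but it is inherited by links, and $G$ sits inside $G_k$ as a link, not merely as an induced subgraph. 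With this observation, neither your unmixedness pre-step nor any poset argument is needed.
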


\begin{proof}
Without loss of generality, we assume that $G$ has no isolated vertex. Note that by \cite[Corollary 2.6]{grv}, the symbolic and the ordinary powers of cover ideal of bipartite graphs coincide. If $J(G)$ has linear resolution, then it follows from \cite[Theorem 8.1.9]{hh} that $G$ is Cohen--Macaulay. In particular, $G$ is unmixed and hence it is very well-covered. Therefore, the "only if" part follows from Theorem \ref{main}. To prove the "if" part assume that $V(G)=X\cup Y$ is a bipartition for the vertex set of $G$. Suppose that $X=\{x_1, \ldots, x_s\}$ and $Y=\{y_1, \ldots, y_t\}$. Clearly, we can assume that $k\geq 2$. It follows from \cite[Corollary 1.6.3]{hh} and Lemma \ref{sympol} that $(J(G)^k)^{{\rm pol}}=J(G_k)$ has linear resolution. Then \cite[Theorem 8.1.9]{hh} implies that $G_k$ is a Cohen--Macaulay graph. Notice that the set$$F=\{x_{i,j}\mid 1\leq i\leq s \ {\rm and} \ 2\leq j\leq k\}$$ is an independent subset of vertices of $G_k$. Since $G_k$ has no isolated vertex, one can easily check that$$N_{G_k}[F]=F\cup\{y_{i,j}\mid 1\leq i\leq t \ {\rm and} \ 1\leq j\leq k-1\}.$$Thus $G_k\setminus N_{G_k}[F]$ is isomorphic to $G$. This mean that ${\rm lk}_{\Delta(G_k)}F=\Delta(G)$. Since $G_k$ is Cohen--Macaulay, it follows that $G$ is Cohen--Macaulay too. Hence, \cite[Theorem 8.1.9]{hh} implies that $J(G)$ has linear resolution.
\end{proof}

Let $M$ be a finitely generated $\mathbb{Z}^n$-graded $S$-module. Let
$u\in M$ be a homogeneous element and $Z\subseteq
\{x_1,\dots,x_n\}$. The $\mathbb {K}$-subspace $u\mathbb{K}[Z]$
generated by all elements $uv$ with $v\in \mathbb{K}[Z]$ is
called a {\it Stanley space} of dimension $|Z|$, if it is a free
$\mathbb{K}[Z]$-module. Here, as usual, $|Z|$ denotes the number
of elements of $Z$. A decomposition $\mathcal{D}$ of $M$ as a
finite direct sum of Stanley spaces is called a {\it Stanley
decomposition} of $M$. The minimum dimension of a Stanley space
in $\mathcal{D}$ is called the {\it Stanley depth} of
$\mathcal{D}$ and is denoted by ${\rm sdepth} (\mathcal {D})$.
The quantity $${\rm sdepth}(M):=\max\big\{{\rm sdepth}
(\mathcal{D})\mid \mathcal{D}\ {\rm is\ a\ Stanley\
decomposition\ of}\ M\big\}$$ is called the {\it Stanley depth}
of $M$. We say that a $\mathbb{Z}^n$-graded $S$-module $M$ satisfies {\it Stanley's inequality} if $${\rm depth}(M) \leq
{\rm sdepth}(M).$$ In fact, Stanley \cite{s} conjectured that every $\mathbb{Z}^n$-graded $S$-module satisfies Stanley's inequality.
This conjecture has been recently disproved in \cite{abcj}.
However, it is still interesting to find the classes of
$\mathbb{Z}^n$-graded $S$-modules which satisfy Stanley's inequality.
For a reader friendly introduction to Stanley depth, we refer to
\cite{psty} and for a nice survey on this topic, we refer to
\cite{h}. In \cite[Corollary 3.4]{s4}, the author proves that for a bipartite graph $G$, the modules $J(G)^k$ and $S/J(G)^k$ satisfy Stanley's inequality for every integer $k\gg 0$. In the following corollary, we prove Stanley's inequality for every symbolic power of the cover ideal of Cohen--Macaulay very well-covered graphs.

\begin{cor} \label{sdepth}
Let $G$ be a very well-covered graph and suppose that its cover ideal $J(G)$ has linear resolution. Then $J(G)^{(k)}$ and $S/J(G)^{k}$ satisfy Stanley's inequality, for every integer $k\geq 1$.
\end{cor}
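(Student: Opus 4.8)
My plan is to reduce everything, through the auxiliary graph $G_k$ and polarization, to a squarefree setting controlled by vertex decomposability. We may assume $G$ has no isolated vertex. Since $J(G)$ has linear resolution, \cite[Theorem 8.1.9]{hh} shows that $G$ is Cohen--Macaulay, so Proposition \ref{very-well} makes $G_k$ Cohen--Macaulay and very well-covered, and \cite[Theorem 1.1]{mmcrty} then makes $G_k$ vertex decomposable (in particular shellable). By Lemma \ref{sympol}, the cover ideal $J(G_k)$ is exactly the polarization $(J(G)^{(k)})^{{\rm pol}}$; writing $T$ for its ambient ring, $J(G_k)$ is the polarization of $J(G)^{(k)}$ and $T/J(G_k)$ is the polarization of $S/J(G)^{(k)}$.

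The core step is to establish Stanley's inequality for $J(G_k)$ and for $T/J(G_k)$ directly from vertex decomposability of $G_k$. For the ideal this is the cleaner half: shellability of $G_k$ implies, as in the proof of Theorem \ref{main}, that $J(G_k)$ has linear quotients, and ideals with linear quotients are known to satisfy Stanley's inequality. For the quotient $T/J(G_k)=\mathbb{K}[\Delta(G_k)^{\vee}]$ --- which is typically not Cohen--Macaulay, so $\mathrm{depth}<\dim$ --- I would use vertex decomposability more essentially: a vertex decomposition of $\Delta(G_k)$ dualizes to a recursive splitting of $J(G_k)$ into deletion- and link-type cover ideals, and induction along this splitting builds a Stanley decomposition of $T/J(G_k)$ whose Stanley depth meets its depth. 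Concretely I would invoke the known fact that for a vertex decomposable graph $H$ both $J(H)$ and $S/J(H)$ satisfy Stanley's inequality, applied to $H=G_k$.

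Next I would descend from $G_k$ back to $G$ by polarization. Letting $c$ denote the number of variables adjoined in forming $J(G_k)=(J(G)^{(k)})^{{\rm pol}}$, depth drops by exactly $c$ when passing from a polarized module to the original, and the parallel behavior of Stanley depth gives that a $\mathbb{Z}^n$-graded module satisfies Stanley's inequality if and only if its polarization does. Applying this equivalence to the pair $J(G_k)$, $T/J(G_k)$ yields Stanley's inequality for $J(G)^{(k)}$ and for $S/J(G)^{(k)}$ for every $k\geq 1$. This settles the symbolic form of the statement; the behavior of Stanley depth under polarization (specifically the inequality $\mathrm{sdepth}(S/I)\geq \mathrm{sdepth}(T/I^{{\rm pol}})-c$) is the principal technical lemma and the first place the argument could break, so I would isolate and prove it separately.

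Finally I must reconcile this with the ordinary power $S/J(G)^{k}$ demanded by the statement. When $G$ is bipartite there is nothing more to do: by \cite[Corollary 2.6]{grv} the ordinary and symbolic powers of a bipartite cover ideal coincide, so $J(G)^{k}=J(G)^{(k)}$ and the conclusion for $S/J(G)^{k}$ is exactly the one just proved. For a general very well-covered $G$ the two powers differ, and this is the main obstacle: $J(G)^{k}$ need not have linear resolution, so neither the reduction through $G_k$ nor Theorem \ref{main} applies to it verbatim. I would address this either by verifying that $J(G)^{k}=J(G)^{(k)}$ persists under the standing hypotheses --- in which case the statement collapses to the symbolic case already handled, consistent with the symbolic formulation $S/J(G)^{(k)}$ used in the introduction --- or, failing that coincidence, by constructing a direct Stanley decomposition of $S/J(G)^{k}$; clarifying this ordinary-versus-symbolic point is, in my view, the crux of the problem.
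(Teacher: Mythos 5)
Your proposal is correct and follows essentially the same route as the paper's own proof: for the ideal, linear quotients (Theorem \ref{main}, via shellability of $G_k$ and Lemma \ref{linq}) together with Soleyman Jahan's result \cite{so}, and for the quotient, the reduction through polarization --- which is precisely \cite[Corollary 4.5]{ikm}, the lemma you proposed to isolate --- followed by vertex decomposability of $G_k$ from \cite[Theorem 1.1]{mmcrty} and Shen's theorem \cite{sh} applied to $T/J(G_k)$. The ordinary-versus-symbolic issue you rightly flagged as the crux is in fact a typo in the corollary's statement: the abstract, the introduction, and the paper's own proof all concern $S/J(G)^{(k)}$, which is exactly the module you handled, so no further argument for $S/J(G)^k$ is needed (and none is given in the paper).
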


\begin{proof}
By Theorem \ref{main}, we know that $J(G)^{(k)}$ has linear quotients. On the other hand, it is known \cite{so} that Stanley's inequality holds true for every monomial ideal with linear quotients. Thus, $J(G)^{(k)}$ satisfies Stanley's inequality.

To prove that $S/J(G)^{k}$ satisfies Stanley's inequality, by \cite[Corollary 4.5]{ikm}, it is sufficient to prove that $T/(J(G)^{(k)})^{{\rm pol}}$ satisfies Stanley's inequality (where $T$ is the new polynomial ring). By assumption and \cite[Theorem 8.1.9]{hh}, we conclude that $G$ is Cohen--Macaulay graph. Again, we can assume that $G$ has no isolated vertex. Thus, Proposition \ref{very-well} implies that $G_k$ is a Cohen--Macaulay very well-covered graph. It then follows from \cite[Theorem 1.1]{mmcrty} that $G_k$ is a vertex decomposable graph. Now, It follows from \cite{sh} that $T/J(G_k)$ satisfies Stanley's inequality. Finally, the assertion follows from Lemma \ref{sympol}.
\end{proof}

In \cite[Theorem 3.2]{cpsty}, the authors prove that the depth of the symbolic powers of the cover ideal of a bipartite graph is a non-increasing sequence. In the following theorem \ref{depthsym}, we prove the same for every very well-covered graphs. This, in particular implies that the sequence $\{{\rm depth}(S/J(G)^{(k)})\}_{k=1}^{\infty}$ is convergent.

\begin{thm} \label{depthsym}
Let $G$ be a very well-covered graph. Then for every integer $k\geq 1$, we have$${\rm depth}(S/J(G)^{(k)})\geq {\rm depth}(S/J(G)^{(k+1)}).$$
\end{thm}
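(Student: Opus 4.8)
The plan is to translate the statement, via polarization, into a monotonicity statement about the Castelnuovo--Mumford regularity of the edge ideals of the auxiliary graphs $G_k$, and then to reduce the latter to a purely combinatorial inequality between induced matching numbers.

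First I would assume, as usual, that $G$ has no isolated vertex. By Lemma \ref{sympol} the polarization $(J(G)^{(k)})^{{\rm pol}}$ is the cover ideal $J(G_k)$, which lives in the polynomial ring $T_k$ on the $nk$ vertices of $G_k$. Since polarization preserves all graded Betti numbers \cite[Corollary 1.6.3]{hh}, it preserves projective dimension; applying the Auslander--Buchsbaum formula over $S$ and over $T_k$ respectively gives
$${\rm depth}(S/J(G)^{(k)}) = {\rm depth}(T_k/J(G_k)) - n(k-1).$$
Next, Terai's formula relating Alexander duality to projective dimension \cite{hh} yields ${\rm pd}(T_k/J(G_k)) = {\rm reg}(I(G_k))$, because $J(G_k)$ is the Alexander dual of the edge ideal $I(G_k)$. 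Feeding this back through Auslander--Buchsbaum over $T_k$ (so ${\rm depth}(T_k/J(G_k)) = nk - {\rm reg}(I(G_k))$), the two displays collapse to the clean identity
$${\rm depth}(S/J(G)^{(k)}) = n - {\rm reg}(I(G_k)).$$
Thus the assertion of the theorem is exactly equivalent to the inequality ${\rm reg}(I(G_k)) \le {\rm reg}(I(G_{k+1}))$ for every $k \ge 1$.

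To handle this regularity inequality I would exploit the combinatorial nature of both sides. By Proposition \ref{very-well}(a) every graph $G_k$ is again very well-covered, so the regularity formula for very well-covered graphs \cite{mmcrty} gives ${\rm reg}(S/I(G_k)) = {\rm indmatch}(G_k)$. Since ${\rm reg}(I(G_k)) = {\rm reg}(S/I(G_k)) + 1$, it then suffices to prove
$${\rm indmatch}(G_k) \le {\rm indmatch}(G_{k+1}).$$
My approach to this would be to start from a maximum induced matching of $G_k$ and relocate the level indices of its endpoints inside $G_{k+1}$, taking advantage of the fact that $G_{k+1}$ offers both one additional level and a defining threshold $p+q\le k+2$ that is one unit larger than that of $G_k$, so that an induced matching of the same cardinality ought to fit inside $G_{k+1}$.

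The main obstacle is precisely this last combinatorial step. Because the threshold of $G_{k+1}$ is larger than that of $G_k$, the naive embedding of $G_k$ on the first $k$ levels of $G_{k+1}$ is \emph{not} induced: any pair of matched vertices in adjacent columns whose level indices sum to exactly $k+2$ becomes a spurious edge of $G_{k+1}$ and can wreck the induced matching. This is also the reason why, for non-bipartite $G$, the graph $G_k$ is in general not an induced subgraph of $G_{k+1}$, so one cannot merely invoke monotonicity of regularity under induced subgraphs. The heart of the argument is therefore to assign the relocated level indices carefully, exploiting the very-well-covered labeling (i)--(iv) of $G$ used in Proposition \ref{very-well}, so that no such spurious edge is created while the size of the matching is preserved. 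Once ${\rm indmatch}(G_k) \le {\rm indmatch}(G_{k+1})$ is secured, the chain of equivalences above delivers the desired inequality ${\rm depth}(S/J(G)^{(k)})\geq {\rm depth}(S/J(G)^{(k+1)})$.
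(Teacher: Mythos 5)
Your reduction is exactly the paper's: after discarding isolated vertices, polarization together with Lemma \ref{sympol} identifies $(J(G)^{(k)})^{{\rm pol}}$ with $J(G_k)$, Auslander--Buchsbaum plus Terai's formula converts the depth inequality into ${\rm reg}(I(G_k))\le {\rm reg}(I(G_{k+1}))$, and Proposition \ref{very-well}(a) together with \cite[Theorem 1.3]{mmcrty} turns that into ${\rm indmatch}(G_k)\le {\rm indmatch}(G_{k+1})$. All of this is correct and matches the paper step for step. The problem is that you never prove this last inequality: you only announce that one should ``relocate the level indices carefully'' using the labeling (i)--(iv). That relocation is precisely the step that does all the work, so as written the proposal has a hole exactly where an argument is required.

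Moreover, the reason you give for abandoning the soft argument is itself false. You are right that the level-preserving inclusion of $G_k$ into the first $k$ levels of $G_{k+1}$ is not induced (pairs of adjacent columns with $p+q=k+2$ acquire spurious edges), but it does not follow that $G_k$ fails to be an induced subgraph of $G_{k+1}$ when $G$ is non-bipartite: in fact $G_k$ \emph{is} an induced subgraph of $G_{k+1}$ for every graph $G$, which is what the paper invokes (an induced matching of an induced subgraph is an induced matching of the ambient graph). An explicit embedding skips one middle level: set $t=\lceil k/2\rceil$ and define $f(p)=p$ for $p\le t$, $f(p)=p+1$ for $t<p\le k$, and map $x_{i,p}\mapsto x_{i,f(p)}$. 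Then for all $1\le p,q\le k$ one has $p+q\le k+1$ if and only if $f(p)+f(q)\le k+2$: if $p,q\le t$ then $f(p)+f(q)=p+q\le 2t\le k+1$ and both inequalities hold; if $p,q>t$ then $p+q\ge 2t+2\ge k+2$ and $f(p)+f(q)=p+q+2>k+2$, so both fail; if $p\le t<q$ then $f(p)+f(q)=p+q+1$ and the two conditions are identical. Since same-column pairs are never edges in either graph, this map preserves edges and non-edges, with no appeal to bipartiteness or to the very-well-covered labeling. (To be fair, the paper also asserts the induced-subgraph claim without exhibiting this embedding, and you correctly sensed it is not obvious; but the correct response is to supply the embedding above, not to declare the claim false and leave the needed inequality unproved.)
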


\begin{proof}
We may assume that $G$ has no isolated vertex. Using Auslander--Buchsbaum Formula, it is enough to prove that$${\rm pd}(S/J(G)^{(k)})\leq {\rm pd}(S/J(G)^{(k+1)}).$$Since projective dimension is preserved by polarization \cite[Corollary 1.6.3]{hh}, we need to prove that$${\rm pd}(T/(J(G)^{(k)})^{{\rm pol}})\leq {\rm pd}(T/(J(G)^{(k+1)})^{{\rm pol}}),$$(where $T$ is the new polynomial ring which contains both $(J(G)^{(k)})^{{\rm pol}}$ and $(J(G)^{(k+1)})^{{\rm pol}}$). It follows from Lemma \ref{sympol} that $(J(G)^{(k)})^{{\rm pol}}=J(G_k)$. Then Terai's theorem \cite[Theorem 8.1.10]{hh} implies that ${\rm pd}(T/J(G_k))={\rm reg}(T/I(G_k))+1$. Since $G$ is a very well-covered graph, we conclude from Proposition \ref{very-well} implies that $G_k$ is very well-covered too. Then \cite[Theorem 1.3]{mmcrty} implies that ${\rm reg}(T/I(G_k))={\rm indmatch}(G_k)$. Thus, $${\rm pd}(T/(J(G)^{(k)})^{{\rm pol}})={\rm indmatch}(G_k)+1.$$Similarly, $${\rm pd}(T/(J(G)^{(k+1)})^{{\rm pol}})={\rm indmatch}(G_{k+1})+1.$$Since $G_k$ is an induced subgraph of $G_{k+1}$, it follows that$${\rm indmatch}(G_k)\leq {\rm indmatch}(G_{k+1})$$ and this completes the proof.
\end{proof}

%%%%%%%%%%%%%%%%%%%%%%%%%%%%%%%%%%%%%%%%%%%%%%%%%%%%%%%%%%%%%%%%%%%%%%%%%%

\section{Bipartite graphs} \label{sec3}

In this section, we determine a linear upper bound for the regularity of powers of cover ideal of bipartite graphs. For a monomial ideal $I$, let ${\rm deg}(I)$ denote the maximum degree of elements of $G(I)$. Thus in particular, ${\rm deg}(J(G))$ is the cardinality of the largest minimal vertex cover of the graph $G$. It is clear that for every integer $k\geq 1$ and every graph $G$, the regularity of $S/J(G)^k$ is at least $k{\rm deg}(J(G))-1$. In Theorem \ref{regbi}, we prove that the regularity of $S/J(G)^k$ can not be much larger than $k{\rm deg}(J(G))-1$, when $G$ is a bipartite graph. We first need the following simple lemma.

\begin{lem} \label{int}
Assume that $I\subseteq S$ is a monomial ideal. Let $S'=\mathbb{K}[x_2, \ldots, x_n]$ be the polynomial ring obtained from $S$ by deleting the variable $x_1$ and set $I'=I\cap S'$. Then ${\rm reg}(I')\leq {\rm reg}(I)$.
\end{lem}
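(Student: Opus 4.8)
The plan is to pass from the ideals to their quotient rings and to compare graded Betti numbers over the two polynomial rings $S$ and $S'$. Since ${\rm reg}(I)={\rm reg}(S/I)+1$ and ${\rm reg}(I')={\rm reg}(S'/I')+1$, the assertion is equivalent to ${\rm reg}_{S'}(S'/I')\leq {\rm reg}_{S}(S/I)$ (we may assume $I'\neq 0$, the remaining case being trivial). I would deduce this from the stronger, multigraded statement that for every $\mathbf{a}=(a_1,\dots,a_n)\in\mathbb{Z}^n$ with $a_1=0$ one has $\beta^{S'}_{p,\mathbf{a}}(S'/I')=\beta^{S}_{p,\mathbf{a}}(S/I)$ for all $p$, where the superscript records the ring over which the Betti number (i.e. ${\rm Tor}$) is computed. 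The point is that the minimal resolution of $S/I$ over $S$ already ``contains'' the resolution of $S'/I'$ in the multidegrees that avoid $x_1$.

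To prove the comparison I would compute multigraded Betti numbers as Koszul homology: $\beta^{R}_{p,\mathbf{a}}(M)=\dim_{\mathbb{K}}H_p(K^{R}_\bullet\otimes_R M)_{\mathbf{a}}$, where $K^{R}_\bullet$ is the Koszul complex on the variables of $R$. Fix $\mathbf{a}$ with $a_1=0$ and write $\mathbf{e}_F$ for the sum of the unit vectors indexed by $F\subseteq[n]$. In the strand of multidegree $\mathbf{a}$, a basis element $e_F$ of $K^{S}_\bullet$ with $1\in F$ forces its coefficient into degree $\mathbf{a}-\mathbf{e}_F$, whose first coordinate is negative, so that contribution vanishes; hence only $F\subseteq\{2,\dots,n\}$ survive and the Koszul differential in this strand involves only $x_2,\dots,x_n$. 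Moreover, since $I'=I\cap S'$, for every monomial multidegree $\mathbf{b}$ with $b_1=0$ the monomial $\mathbf{x}^{\mathbf{b}}$ lies in $I$ if and only if it lies in $I'$, so $(S/I)_{\mathbf{b}}=(S'/I')_{\mathbf{b}}$. Consequently the multidegree-$\mathbf{a}$ strand of $K^{S}_\bullet\otimes S/I$ agrees verbatim with that of $K^{S'}_\bullet\otimes S'/I'$, and passing to homology yields the claimed equality of Betti numbers.

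Finally, because $S'/I'$ is an $S'$-module, all of its nonzero Betti numbers occur in multidegrees $\mathbf{a}$ with $a_1=0$; hence, writing $|\mathbf{a}|=a_1+\dots+a_n$, one gets ${\rm reg}_{S'}(S'/I')=\max\{|\mathbf{a}|-p : a_1=0,\ \beta^{S'}_{p,\mathbf{a}}(S'/I')\neq 0\}=\max\{|\mathbf{a}|-p : a_1=0,\ \beta^{S}_{p,\mathbf{a}}(S/I)\neq 0\}\leq {\rm reg}_{S}(S/I)$, which is exactly the required inequality. The only genuinely non-formal step, and hence the main obstacle, is this multidegree comparison of Betti numbers; the rest is bookkeeping. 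A shorter-looking alternative uses the isomorphism $S/(I+(x_1))\cong S'/I'$ together with the short exact sequence $0\to (S/(I:x_1))(-1)\xrightarrow{\,x_1\,}S/I\to S/(I+(x_1))\to 0$: the standard regularity inequality then reduces everything to the monotonicity ${\rm reg}(S/(I:x_1))\leq{\rm reg}(S/I)$. Since that colon bound is not formal and in fact needs the same multigraded input, I would prefer the direct Koszul argument above.
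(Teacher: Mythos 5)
Your proof is correct, but it takes a more self-contained route than the paper. The paper's proof is two lines: first use polarization to reduce to the case where $I$ is squarefree (this implicitly uses that the polarization of $I'=I\cap S'$ is obtained from the polarization of $I$ by deleting the variables that polarize $x_1$, which is the same kind of restriction statement), and then invoke Hochster's formula: for squarefree $I=I_\Delta$ one has $I'=I_{\Delta'}$ with $\Delta'={\rm del}_{\Delta}(x_1)$, and every restriction of $\Delta'$ to a subset $W\subseteq\{x_2,\dots,x_n\}$ is also a restriction of $\Delta$, so every homological contribution to ${\rm reg}(I')$ already occurs among those computing ${\rm reg}(I)$. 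Your argument instead proves the needed restriction principle directly for arbitrary monomial ideals: the multidegree-$\mathbf{a}$ strands (with $a_1=0$) of the Koszul complexes tensored with $S/I$ and with $S'/I'$ coincide, giving equality of the corresponding multigraded Betti numbers and hence the inequality of regularities; all the individual steps (vanishing of $(S/I)_{\mathbf{b}}$ when $b_1<0$, the identification $(S/I)_{\mathbf{b}}=(S'/I')_{\mathbf{b}}$ when $b_1=0$ via $I'=I\cap S'$, and the computation of regularity from multigraded Betti numbers) check out. This strand comparison is precisely the mechanism underlying Hochster's formula, so the two proofs are close cousins; what yours buys is that it bypasses polarization and the squarefree reduction entirely (in particular you never need to verify compatibility of polarization with the intersection $I\cap S'$) and it establishes the sharper statement that the Betti numbers themselves agree in the relevant multidegrees, while the paper's version is shorter given the standard cited machinery.
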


\begin{proof}
Using polarization, we can assume that $I$ is a squarefree monomial ideal. Then the assertion follows immediately from Hochster's formula \cite[Theorem 8.1.1]{hh}.
\end{proof}

The following lemma is a consequence of Lemma \ref{int}.

\begin{lem} \label{colon}
Let $I$ be a monomial ideal of $S$. Then for every monomial $u\in S$, we have ${\rm reg}(S/(I:u))\leq {\rm reg}(S/I)$.
\end{lem}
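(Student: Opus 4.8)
The plan is to reduce to a colon by a single variable and then feed Lemma~\ref{int} into a short exact sequence argument. Since the regularity of a nonzero proper ideal and that of its quotient differ by one, i.e. ${\rm reg}(S/I)={\rm reg}(I)-1$, the assertion is equivalent to ${\rm reg}(I:u)\leq {\rm reg}(I)$. Writing the monomial $u$ as a product of variables and using the identity $I:(vw)=(I:v):w$, it suffices to treat the case $u=x_i$ and then iterate, replacing $I$ by the monomial ideal $I:x_i$ at each step (the degenerate cases where a colon equals $S$ are trivial). By relabelling the variables we may assume $i=1$.

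For the single-variable case, I would use the short exact sequence of graded $S$-modules
$$0 \longrightarrow \big(S/(I:x_1)\big)(-1) \stackrel{\cdot x_1}{\longrightarrow} S/I \longrightarrow S/(I+(x_1)) \longrightarrow 0.$$
The standard regularity estimate ${\rm reg}(A)\leq \max\{{\rm reg}(B),{\rm reg}(C)+1\}$ for a short exact sequence $0\to A\to B\to C\to 0$, applied to the left-hand term together with ${\rm reg}\big((S/(I:x_1))(-1)\big)={\rm reg}(S/(I:x_1))+1$, then yields
$${\rm reg}\big(S/(I:x_1)\big)\leq \max\big\{{\rm reg}(S/I)-1,\ {\rm reg}(S/(I+(x_1)))\big\}.$$

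The point where Lemma~\ref{int} enters is the estimate of the last module. Since $x_1\in I+(x_1)$, there is a graded isomorphism $S/(I+(x_1))\cong S'/\overline{I}$, where $S'=\mathbb{K}[x_2,\ldots,x_n]$ and $\overline{I}$ denotes the image of $I$ under the substitution $x_1\mapsto 0$; a direct check on monomials shows $\overline{I}=I\cap S'=I'$, because a monomial not involving $x_1$ lies in $I$ precisely when it is divisible by a generator of $I$ that does not involve $x_1$. As regularity does not depend on the ambient polynomial ring, we obtain ${\rm reg}(S/(I+(x_1)))={\rm reg}(S'/I')={\rm reg}(I')-1$, and Lemma~\ref{int} gives ${\rm reg}(I')-1\leq {\rm reg}(I)-1={\rm reg}(S/I)$. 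Substituting into the previous inequality produces ${\rm reg}(S/(I:x_1))\leq {\rm reg}(S/I)$, and iterating over the variables dividing $u$ finishes the argument.

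The only step that is not purely formal is the identification $S/(I+(x_1))\cong S'/(I\cap S')$, which is exactly what converts the \emph{deletion} appearing in the short exact sequence into the \emph{intersection} controlled by Lemma~\ref{int}; the rest is the usual homological bookkeeping for regularity along a short exact sequence. I do not anticipate a serious obstacle here: the characteristic-independence built into Lemma~\ref{int} (obtained via Hochster's formula after polarization) is inherited automatically, and the reduction from an arbitrary monomial $u$ to a single variable is formal.
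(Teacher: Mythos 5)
Your proposal is correct and follows essentially the same route as the paper: the same short exact sequence $0\to (S/(I:x_1))(-1)\to S/I\to S/(I,x_1)\to 0$, the same regularity bound along it, and the same use of Lemma~\ref{int} on the last term (where you helpfully make explicit the identification $S/(I,x_1)\cong S'/(I\cap S')$ that the paper leaves implicit). No gaps; the extra care you take with the reduction to a single variable and the degenerate cases only makes the argument more complete than the published one.
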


\begin{proof}
Clearly, we can assume that $u$ is a variable, say $u=x_1$. By applying \cite[Corollary 18.7]{p'} on the exact sequence
\[
\begin{array}{rl}
0\longrightarrow S/(I:x_1)(-1)\longrightarrow S/I\longrightarrow S/(I, x_1)
\longrightarrow 0,
\end{array}
\]
we obtain that
\[
\begin{array}{rl}
{\rm reg}(S/(I:x_1))+1\leq \max\{{\rm reg}(S/I), {\rm reg}(S/(I,x_1))+1\}\leq {\rm reg}(S/I)+1,
\end{array}
\]
Where the last inequality follows from Lemma \ref{int}.
\end{proof}

We are now ready to prove the main result of this section.

\begin{thm} \label{regbi}
Let $G$ be a bipartite graph with $n$ vertices. Then for every integer $k\geq 1$, we have$${\rm reg}(S/J(G)^k)\leq k{\rm deg}(J(G))+{\rm reg}(S/J(G))-1.$$
\end{thm}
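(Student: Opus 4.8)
The plan is to proceed by induction on the number of vertices of $G$, peeling off a single vertex and using the two colon lemmas just established (Lemma \ref{int} and Lemma \ref{colon}) to control how the regularity changes. Recall that for a bipartite graph the symbolic and ordinary powers of $J(G)$ agree (by \cite[Corollary 2.6]{grv}), so I may freely pass between $J(G)^k$ and $J(G)^{(k)}$ and exploit the primary decomposition $J(G)^k = J(G)^{(k)} = \bigcap_{\{x_i,x_j\}\in E(G)}(x_i,x_j)^k$. The base case is when $G$ has no edges (or is a single edge), where $J(G)^k$ is trivial or a power of a single height-one prime and the bound is immediate.

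For the inductive step I would fix a vertex, say $x=x_1$, and set up the short exact sequence
\[
0\longrightarrow S/\bigl(J(G)^k : x\bigr)(-1)\longrightarrow S/J(G)^k\longrightarrow S/\bigl(J(G)^k, x\bigr)\longrightarrow 0.
\]
By \cite[Corollary 18.7]{p'} the regularity of the middle term is bounded by the maximum of $\operatorname{reg}\bigl(S/(J(G)^k:x)\bigr)+1$ and $\operatorname{reg}\bigl(S/(J(G)^k,x)\bigr)$. The two outer terms must now each be identified with the $k$-th power of a cover ideal of a smaller bipartite graph, so that the inductive hypothesis applies. The quotient $S/(J(G)^k,x)$ corresponds, after setting $x=0$, to deleting $x$ from the vertex set; the relevant bipartite graph here is $G\setminus x$, whose maximum generator degree and regularity are controlled by those of $G$ via Lemma \ref{int}. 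The colon term $J(G)^k:x$ is the more delicate one: using the intersection description of $J(G)^k$, the colon by $x$ removes those primary components $(x_i,x_j)^k$ containing $x$, and the combinatorial effect should be to pass to the cover ideal of the graph obtained by deleting the closed neighborhood $N_G[x]$ (the vertices forced into every cover once $x$ is excluded). The key numerical point is that each such reduction drops the generator degree $\operatorname{deg}(J(G))$ by exactly one relative to the contribution of $x$, which is what makes the linear term $k\operatorname{deg}(J(G))$ close up in the induction.

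I would then assemble the estimate: the deletion term contributes at most $k\operatorname{deg}(J(G\setminus x))+\operatorname{reg}(S/J(G\setminus x))-1\le k\operatorname{deg}(J(G))+\operatorname{reg}(S/J(G))-1$ by induction together with Lemma \ref{int} (which guarantees $\operatorname{reg}(S/J(G\setminus x))\le\operatorname{reg}(S/J(G))$ and an analogous inequality for the smaller graph via the colon lemma), while the colon term, after the shift by $+1$ and the drop of one in the generator degree, contributes the matching bound. Lemma \ref{colon} provides exactly the inequality $\operatorname{reg}(S/(J(G)^k:x))\le\operatorname{reg}(S/J(G)^k)$ that keeps the recursion from losing ground, and it is the combination of this with the degree bookkeeping that forces the final bound $k\operatorname{deg}(J(G))+\operatorname{reg}(S/J(G))-1$.

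The main obstacle I anticipate is the precise combinatorial and ideal-theoretic identification of the colon ideal $J(G)^k:x$ with a $k$-th power of a cover ideal of a genuinely smaller bipartite graph, and checking that the generator-degree invariant $\operatorname{deg}(J(\cdot))$ and the regularity $\operatorname{reg}(S/J(\cdot))$ both descend compatibly so that the two branches of the short exact sequence yield the same linear bound. Getting the degree bookkeeping to balance — ensuring the loss of one vertex corresponds to a loss of exactly one in $k\operatorname{deg}(J(G))$ on the colon side while the deletion side is bounded directly by the inductive hypothesis — is where the real work lies; everything else is a routine application of the short exact sequence and the two preparatory lemmas.
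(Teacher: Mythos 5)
Your overall framework (peel off a vertex, use the colon/deletion exact sequence, induct) is the same skeleton as the paper's, which invokes \cite[Lemma 2.10]{dhs} — itself just the short exact sequence you wrote. But the step you flag as "the main obstacle" is a genuine gap, and your proposed resolution of it is false. For $k\geq 2$ the colon ideal $(J(G)^k:x)$ is \emph{not} obtained by discarding the primary components containing $x$, nor is it (up to a monomial factor) the $k$-th power of the cover ideal of any smaller graph. Coloning a component by $x$ does not kill it; it lowers its exponent: $((x,x_j)^k:x)=(x,x_j)^{k-1}$, while components of edges avoiding $x$ are untouched. So
\[
(J(G)^k:x)\;=\;\bigcap_{\{x,x_j\}\in E(G)}(x,x_j)^{k-1}\ \cap \bigcap_{\substack{\{x_i,x_j\}\in E(G)\\ x\notin\{x_i,x_j\}}}(x_i,x_j)^k,
\]
a \emph{mixed} intersection that is not a power of a cover ideal. (You also have the two branches swapped: it is the quotient/deletion term $(J(G)^k,x)$, equivalently $J(G)^k\cap S_1$, that equals $u^k J(G\setminus N_G[x])^k$ with $u=\prod_{y\in N_G(x)}y$; the colon term is the problematic one.) Moreover, your fallback of bounding $\operatorname{reg}(S/(J(G)^k:x))\leq\operatorname{reg}(S/J(G)^k)$ via Lemma \ref{colon} is circular inside the exact sequence: it yields $\operatorname{reg}(S/J(G)^k)\leq\max\{\operatorname{reg}(S/J(G)^k)+1,\dots\}$, which is vacuous.

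The missing idea, and the actual content of the paper's proof, is to exploit the bipartition globally rather than one vertex at a time: writing $U=\{x_1,\dots,x_t\}$ for one side, one colons successively by $x_1,x_2,\dots,x_t$. Each edge of a bipartite graph meets $U$ in exactly one vertex, so after coloning by the full product every component's exponent drops by exactly one, giving $(J(G)^k:x_1x_2\cdots x_t)=J(G)^{k-1}$ (\cite[Lemma 3.2]{s4}); this is what permits induction on $k$ (the paper inducts on $m+k$, edges plus power, not on the number of vertices). The intermediate colon ideals $J_i'=(J(G)^k:x_1\cdots x_i)$ are never identified with cover ideals at all; instead they are controlled by a chain of inequalities in which, at each step, the deletion branch $J(G)^k\cap S_{i+1}=u_{i+1}^kJ(G\setminus N_G[x_{i+1}])^kS_{i+1}$ is handled by induction on the number of edges, and Lemma \ref{colon} is applied to \emph{that} intersection ideal (not to $J(G)^k$ itself). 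The accumulated $+1$ shifts from the $t$ colon steps are finally absorbed using $t=|U|\leq\operatorname{deg}(J(G))$. Without this "colon out an entire side of the bipartition" mechanism, the induction you set up cannot close.
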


\begin{proof}
Assume that $V(G)=U\cup W$ is a bipartition for the vertex set of $G$. Without loss of generality, we may assume that $U=\{x_1, \ldots, x_t\}$ and $W=\{x_{t+1}, \ldots, x_n\}$, for some integer $t$ with $1\leq t\leq n$. Let $m$ be the number of edges of $G$. We prove the assertions by induction on $m+k$. Clearly, we can assume that $G$ has no isolated vertex.

There is nothing to prove for $k=1$. If $m=1$, then $J(G)=(x_1,y_1)$. Hence ${\rm deg}(J(G))=1$ and ${\rm reg}(S/J(G)^k)=k-1$. Thus, the desired inequality is true for $m=1$. Therefore, assume that $k,m\geq 2$. Let $S_1=\mathbb{K}[x_2, \ldots, x_n]$ be the polynomial ring obtained from $S$ by deleting the variable $x_1$ and consider the ideals $J_1=J(G)^k\cap S_1$ and
$J_1'=(J(G)^k:x_1)$. It follows from \cite[Lemma 2.10]{dhs} that
\[
\begin{array}{rl}
{\rm reg}(S/J(G)^k)\leq \max \{{\rm reg}_{S_1}(S_1/J_1), {\rm reg}_S(S/J_1')+1\},
\end{array} \tag{\dag} \label{dag}
\]

Notice that $J_1=(J(G)\cap S_1)^k$. Hence, by Lemma \cite[Lemma 2.2]{s4}, there exists a monomial $u_1\in S_1$ with ${\rm deg}(u_1)={\rm deg}_G(x_1)$ such that $J(G)\cap S_1=u_1J(G\setminus N_G[x_1])S_1$ and thus, $J_1=u_1^kJ(G\setminus N_G[x_1])^kS_1$. Notice that if $C$ is a minimal vertex cover of $G\setminus N_G[x_1]$, then $C\cup N_G(x_1)$ is a minimal vertex cover of $G$. This shows that ${\rm deg}(J(G\setminus N_G[x_1]))+{\rm deg}_G(x_1)\leq {\rm deg}(J(G))$. On the other hand, Lemma \ref{int} implies that ${\rm reg}(J(G)\cap S_1)\leq {\rm reg}(J(G))$ and therefore,$${\rm reg}_{S_1}(S_1/J(G\setminus N_G[x_1])S_1)\leq {\rm reg}(S/J(G))-{\rm deg}(u_1).$$ Hence, by the induction hypothesis we conclude that
\begin{align*}
& {\rm reg}_{S_1}(S_1/J_1)={\rm reg}_{S_1}(S_1/J(G\setminus N_G[x_1])^kS_1)+k{\rm deg}(u_1)\\
& \leq k{\rm deg}(J(G\setminus N_G[x_1]))+{\rm reg}(S/J(G\setminus N_G[x_1]))-1+k{\rm deg}_G(x_1)\\
& \leq k({\rm deg}(J(G))-{\rm deg}_G(x_1))+{\rm reg}(S/J(G))-{\rm deg}_G(x_1)-1+k{\rm deg}_G(x_1)\\
& \leq k{\rm deg}(J(G))+{\rm reg}(S/J(G))-1.
\end{align*}
Thus, using the inequality (\ref{dag}), it is enough to prove that ${\rm reg}_S(S/J_1')\leq k{\rm deg}(J(G))+{\rm reg}(S/J(G))-2$.

For every integer $i$ with $2\leq i\leq t$, let $S_i=\mathbb{K}[x_1, \ldots, x_{i-1}, x_{i+1}, \ldots, x_n]$ be the polynomial ring obtained from $S$ by deleting the variable $x_i$ and consider the ideals $J_i'=(J_{i-1}':x_i)$ and $J_i=J_{i-1}'\cap S_i$.

{\bf Claim.} For every integer $i$ with $1\leq i\leq t-1$ we have$${\rm reg}(S/J_i')\leq \max\{k{\rm deg}(J(G))+{\rm reg}(S/J(G))-2, {\rm reg}_S(S/J_{i+1}')+1\}.$$

{\it Proof of the Claim.} For every integer $i$ with $1\leq i\leq t-1$, we know from \cite[Lemma 2.10]{dhs} that
\[
\begin{array}{rl}
{\rm reg}(S/J_i')\leq \max \{{\rm reg}_{S_{i+1}}(S_{i+1}/J_{i+1}), {\rm reg}_S(S/J_{i+1}')+1\}.
\end{array} \tag{$\ast$} \label{aast}
\]

Notice that for every integer $i$ with $1\leq i\leq t-1$, we have $J_i'=(J(G)^k:x_1x_2\ldots x_i)$. Thus $J_{i+1}=J_i'\cap S_{i+1}=((J(G)^k\cap S_{i+1}):_{S_{i+1}}x_1x_2\ldots x_i)$. Hence, it follows from Lemma \ref{colon} that

\[
\begin{array}{rl}
{\rm reg}_{S_{i+1}}(S_{i+1}/J_{i+1})\leq {\rm reg}_{S_{i+1}}(S_{i+1}/(J(G)^k\cap S_{i+1})).
\end{array} \tag{$\ast\ast$} \label{aaaast}
\]

By Lemma \cite[Lemma 2.2]{s4}, we conclude that there exists a monomial $u_{i+1}\in S_{i+1}$, with ${\rm deg}(u_{i+1})={\rm deg}_G(x_{i+1})$ such that $J(G)\cap S_{i+1}=u_{i+1}J(G\setminus N_G[x_{i+1}])S_{i+1}$. Therefore$$J(G)^k\cap S_{i+1}=u_{i+1}^kJ(G\setminus N_G[x_{i+1}])^kS_{i+1}.$$Notice that if $C$ is a minimal vertex cover of $G\setminus N_G[x_{i+1}]$, then $C\cup N_G(x_{i+1})$ is a minimal vertex cover of $G$. This shows that ${\rm deg}(J(G\setminus N_G[x_{i+1}]))+{\rm deg}_G(x_{i+1})\leq {\rm deg}(J(G))$. On the other hand, Lemma \ref{int} implies that ${\rm reg}(J(G)\cap S_{i+1})\leq {\rm reg}(J(G))$ and therefore,$${\rm reg}_{S_{i+1}}(S_{i+1}/J(G\setminus N_G[x_{i+1}])S_{i+1})\leq {\rm reg}(S/J(G))-{\rm deg}(u_{i+1}).$$ Hence, by the induction hypothesis we conclude that
\begin{align*}
& {\rm reg}_{S_{i+1}}(S_{i+1}/(J(G)^k\cap S_{i+1}))={\rm reg}_{S_{i+1}}(S_{i+1}/J(G\setminus N_G[x_{i+1}])^kS_{i+1})+k{\rm deg}(u_{i+1})\\
& \leq k{\rm deg}(J(G\setminus N_G[x_{i+1}]))+{\rm reg}(S/J(G\setminus N_G[x_{i+1}]))-1+k{\rm deg}_G(x_{i+1})\\
& \leq k({\rm deg}(J(G))-{\rm deg}_G(x_{i+1}))+{\rm reg}(S/J(G))-{\rm deg}_G(x_{i+1})-1+k{\rm deg}_G(x_{i+1})\\
& \leq k{\rm deg}(J(G))+{\rm reg}(S/J(G))-2.
\end{align*}
Now the claim follows by inequalities (\ref{aast}) and (\ref{aaaast}).

Now, $J_t'=(J(G)^k:x_1x_2\ldots x_t)$ and hence, \cite[Lemma 3.2]{s4} implies that $J_t'=J(G)^{k-1}$ and thus, by induction hypothesis we conclude that ${\rm reg}(S/J_t')\leq (k-1){\rm deg}(J(G))+{\rm reg}(S/J(G))-1$. Therefore, using the claim repeatedly, implies that
\begin{align*}
& {\rm reg}(S/J_1')\leq \max\{k{\rm deg}(J(G))+{\rm reg}(S/J(G))-2, {\rm reg}_S(S/J_t')+t-1\}\\
& \leq \max\{k{\rm deg}(J(G))+{\rm reg}(S/J(G))-2, (k-1){\rm deg}(J(G))+{\rm reg}(S/J(G))+t-2\}.
\end{align*}
Note that $t=\mid U\mid\leq {\rm deg}(J(G))$. Thus, the above inequalities imply that ${\rm reg}(S/J_1')\leq k{\rm deg}(J(G))+{\rm reg}(S/J(G))-2$. This completes the proof of the theorem.
\end{proof}

%%%%%%%%%%%%%%%%%%%%%%%%%%%%%%%%%%%%%%%%%%%%%%%%%%%%%%%%%%%%%%%%%%%%%%%%%%

%\section*{Acknowledgment}

%%%%%%%%%%%%%%%%%%%%%%%%%%%%%%%%%%%%%%%%%%%%%%%%%%%%%%%%%%%%%%%%%%%%%%%%%%

%%%%%%%%%%%%%%%%%%%%%%%%%%%%%%%%%%%%%%%%%%%%%%%%%%%%%%%%%%%%%%%%%%%%%%%%%%

\end{document}